\theoremstyle{definition}
\newtheorem{theorem}{Theorem}[section]
\newtheorem{lemma}[theorem]{Lemma}
\newtheorem{definition}[theorem]{Definition}
\newtheorem{conjecture}[theorem]{Conjecture}
\newcommand{\floor}[1]{\left\lfloor{#1}\right\rfloor}
\newcommand{\pa}[1]{\mathcal{P}_{#1}}
\newcommand{\abs}[1]{\left\lvert{#1}\right\rvert}
\def\path{semi-path}
\begin{document}
\title{The Maximum Wiener Index of Maximal Planar Graphs}

\author[1]{Debarun Ghosh}
\author[1,2]{Ervin Gy\H{o}ri} 
\author[1,3]{Addisu Paulos}
\author[1,2]{Nika Salia}
\author[1,4]{Oscar Zamora} 

\affil[1]{Central European University, Budapest.\par
 \texttt{ghosh_debarun@phd.ceu.edu, gyori.ervin@renyi.hu, paulos_addisu@phd.ceu.edu, salia.nika@renyi.hu, zamora-luna_oscar@phd.ceu.edu}}
 \affil[2]{Alfr\'ed R\'enyi Institute of Mathematics, Hungarian Academy of Sciences.}
\affil[3]{Addis Ababa University, Addis Ababa.}
 \affil[4]{Universidad de Costa Rica, San Jos\'e.}
\maketitle

\begin{abstract}

The Wiener index of a connected graph is the sum of the distances between all pairs of vertices in the graph. 
It was conjectured that the Wiener index of an $n$-vertex maximal planar graph is at most $\lfloor\frac{1}{18}(n^3+3n^2)\rfloor$.
We prove this conjecture and for every $n$, $n \geq 10$,  determine the unique $n$-vertex maximal planar graph for which this maximum is attained.

\end{abstract}
\section{Introduction}

The Wiener index is a graph invariant based on distances in the graph. For a connected graph $G$, the Wiener index is the sum of distances between all unordered pairs of vertices in the graph and is denoted by $W(G)$. That means,
\begin{equation*}
    W(G) = \sum_{\{u,v\} \subseteq V(G)} d_G(u,v).
\end{equation*} 
Where  $d_G(u,v)$ denotes the distance from $u$ to $v$ i.e. the minimum length of a path from $u$ to $v$ in the graph $G$. 

It was first introduced by Harry Wiener in 1947, while studying its correlations with boiling points of paraffin considering its molecular structure  \cite{first}. Since then, it has been one of the most frequently used topological indices in chemistry, as molecular structures are usually modelled as undirected graphs. Many results on the Wiener index and closely related parameters such as the gross status \cite{third}, the distance of graphs \cite{fourth} and the transmission \cite{fifth} have been studied. A great deal of knowledge on the Wiener index is accumulated in several survey papers \cite{1,2,3,4,5}.
Finding a sharp bound on the Wiener index for graphs under some constraints, has been one of the  research topics attracting many researchers. 

The most basic upper bound of $W(G)$ states that, if $G$ is a connected graph of order $n$,
then
\begin{equation}
    W(G) \leq \frac{(n-1)n(n+1)}{6},
\end{equation} 
which is attained only by a path \cite{15,7,8}.
Many sharp or asymptotically sharp bounds on $W(G)$ in terms of other graph parameters are known, for instance, minimum degree \cite{9,10,11}, connectivity \cite{12,13}, edge-connectivity \cite{14,15} and maximum degree \cite{16}. For finding more details in mathematical aspect of Wiener index, see also results \cite{17,18,19,20,21,22,23,24,25,26}.

One can study the Wiener index of the family of connected planar graphs. 
Since the bound given in (1) is attained by a path, it is natural to ask for some particular family of planar graphs. 
For instance, the family of maximal planar graphs. 
The Wiener index of maximal planar graph with $n$ vertices, $n\geq 3$ has a sharp lower bound $(n-2)^2+2$, the bound is attained by any maximal planar graph such that the distance between any pair of vertices is at most 2 (for instance a planar graph containing the $n$-vertex star).
 Z. Che and K.L. Collins \cite{28}, and independently \' E. Czabarka, P. Dankelmann, T. Olsen and L.A. Sz\' ekely \cite{29},  gave a sharp upper bound of a particular class of maximal planar graphs known as \textit{Apollonian networks}. 
An Apollonian network may be formed, starting from a single triangle embedded on the plane, by repeatedly selecting a triangular face of the embedding, adding a new vertex inside the face, and connecting the new vertex to each three vertices of the face. They showed that 
\begin{theorem}(\cite{28, 29})
Let $G$ be an Apollonian network of order $n\geq 3$. Then $W(G)$ has a sharp upper bound
\begin{equation*}W(G)\leq \bigg\lfloor\frac{1}{18}(n^3+3n^2)\bigg\rfloor=
\begin{cases}
\frac{1}{18}(n^3+3n^2), &\text{if $n\equiv 0(mod \ 3)$;}\\
\frac{1}{18}(n^3+3n^2-4), &\text{if $n\equiv 1(mod \ 3)$;}\\
\frac{1}{18}(n^3+3n^2-2), &\text{if $n\equiv 2(mod \ 3)$.}
\end{cases}
\end{equation*}
\end{theorem}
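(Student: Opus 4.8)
We set up notation that drives the whole argument. For a vertex $v$ of a connected graph $G$ write $n_i(v)=|N_i(v)|$ for the number of vertices at distance exactly $i$ from $v$, so that the transmission $\sigma(v):=\sum_{u\in V(G)}d_G(u,v)=\sum_{i\ge 1} i\,n_i(v)$ and $W(G)=\frac12\sum_{v\in V(G)}\sigma(v)$. The first observation is that a purely vertex-by-vertex estimate cannot work: in a triangulation the layer sizes $n_i(v)$ cannot all be tiny, but carrying this through only gives $\sigma(v)\le (1+o(1))\frac{n^2}{6}$, hence $W(G)\le(1+o(1))\frac{n^3}{12}$, which is far from the target $\frac{n^3}{18}$. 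So the proof must be global, exploiting that the BFS layerings around different vertices are strongly correlated — most vertices lie ``in the middle'' of most layerings, and only a few can be ``at an end''.

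\textbf{Step 1: the geometry of BFS layers in a triangulation.} For a maximal planar graph $G$ and a vertex $v$, the induced subgraph $G[N_{\le i}(v)]$ is connected and inherits a plane embedding whose bounded faces are exactly the triangular faces of $G$ together with the regions that contain the vertices of $N_{>i}(v)$. I would first show — after dealing with the cut vertices of $G[N_{\le i}(v)]$, which contribute only a bounded defect — that each such ``outer'' region is bounded by a cycle lying inside $N_i(v)$, so the total boundary length $c_i$ satisfies $c_i\ge 3$ for $1\le i\le \mathrm{ecc}(v)-1$. The key structural output is control of how $c_i$, and hence $n_i(v)$, can evolve with $i$; this yields $\mathrm{ecc}(v)\le (n-1)/3+O(1)$ and, via a convexity/rearrangement argument, that $\sigma(v)$ is maximized when every intermediate layer has the minimum possible size, i.e. when $G$ ``looks like a triangular tube'' as seen from $v$.

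\textbf{Step 2: from layers to the global bound.} Write $p_{=t}$ for the number of unordered vertex pairs at distance exactly $t$, so that $W(G)=\sum_{t\ge 1} t\,p_{=t}$ with $\sum_{t\ge 1}p_{=t}=\binom n2$ and $p_{=1}=3n-6$. The goal is to show that the distance sequence $(p_{=t})_{t\ge 1}$ is majorized — in the sense relevant for maximizing $\sum_t t\,p_{=t}$ — by that of the triangular tube. For this I would apply the layer analysis of Step 1 not from a single root but from a carefully chosen family: the two endpoints $x,y$ of a diametral shortest path $P$ together with the internal vertices of $P$. Two facts combine: layers around the geodesic $P$ are wide in a triangulation, which caps how many pairs can be far apart; and the tail sums $\sum_{t\ge s}p_{=t}$ are controlled by the layer sizes along these roots, the loss from a ``bulge'' far from $P$ being paid for by the vertices it consumes (which shorten the tube, hence the diameter). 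Summing the resulting inequalities gives $W(G)\le W(T_n)$, where $T_n$ is the triangular-tube Apollonian network that attains the bound of the theorem above, so $W(T_n)=\lfloor\frac1{18}(n^3+3n^2)\rfloor$.

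\textbf{Step 3: uniqueness for $n\ge 10$, and the main difficulty.} Finally I would trace the equality conditions back through Steps 1--2: equality forces every intermediate layer seen from an endpoint of a diameter to have exactly the minimum size dictated by $n\bmod 3$, forces all boundary cycles to be triangles, and forces the annulus between each pair of consecutive triangular slices to be triangulated without creating any distance shortcut; together these constraints pin down a unique triangulation, namely $T_n$. A finite check disposes of small $n$, and this is precisely where the hypothesis $n\ge 10$ enters, since for $n\le 9$ several non-isomorphic triangulations tie for the maximum (the floor in the formula absorbing the discrepancy). The main obstacle is Step 2: upgrading the soft statement ``consecutive layers cannot both be small'' to the sharp leading constant $\frac1{18}$ — rather than the easy but too-weak $\frac1{12}$ — requires a genuine majorization-type comparison between the distance distribution of an arbitrary triangulation and that of the tube, and doing this while keeping exact track of the second-order term $3n^2$ and of the floor, in tandem with the uniqueness bookkeeping, is the delicate part of the proof.
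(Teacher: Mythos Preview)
First, a clarification: this theorem is not proved in the paper --- it is quoted from \cite{28,29}. What the paper itself proves is the stronger Theorem~\ref{Main_Theorem}, extending the same bound from Apollonian networks to all maximal planar graphs, and your proposal is visibly aimed at that stronger statement (you work with ``an arbitrary triangulation'' and invoke uniqueness for $n\ge 10$, neither of which belongs to the quoted result). So the meaningful comparison is with the paper's proof of Theorem~\ref{Main_Theorem}.

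That proof is entirely different from your outline. It proceeds by induction on $n$ with a connectivity case split. If $G$ is $5$-connected, every non-terminal BFS layer has size at least $5$, and the crude per-vertex bound $W(G)\le\frac n2\,\sigma_5(n-1)$ already undershoots $\lfloor\frac1{18}(n^3+3n^2)\rfloor$. If $G$ is only $3$- or $4$-connected, Lemma~\ref{S_Connected_Cycle_lemma} produces a separating triangle or $4$-cycle; one deletes the smaller side, applies the induction hypothesis to the resulting smaller maximal planar graph, and bounds the cross-distances via the elementary Lemmas~\ref{Three_Connected_Lemma}--\ref{Five_Connected_Lemma}. Equality survives only through the $3$-connected subcase with a single vertex inside a separating triangle, which forces $G=T_n$. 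There is no majorization of the global distance distribution and no diametral-path analysis anywhere.

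Your proposal, by contrast, is a strategy sketch, and you yourself flag its gap: Step~2. Per-vertex layer counting gives only $W(G)\lesssim n^3/12$, as you note; the entire content of the theorem is the improvement to $n^3/18$, and the mechanism you offer for it --- controlling the tails $\sum_{t\ge s}p_{=t}$ via layers along a diametral geodesic, with ``bulges paying for themselves'' --- is asserted, not established. Nothing in Steps~1 or~3 supplies the missing factor of $\tfrac23$, and the boundary-cycle discussion in Step~1 (which has to contend with cut vertices of $G[N_{\le i}(v)]$ and non-simple boundaries) is itself only gestured at. As written this is not a proof; the paper's induction/connectivity argument sidesteps the obstacle you identify rather than overcoming it.
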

It has been shown explicitly  that the Wiener index is attained for the maximal planar graphs $T_n$, we will give the construction of $T_n$ in the next section, see Definition \ref{xx}.  
The authors in \cite{28} also conjectured that this bound also holds for every maximal planar graph. 
It has been shown that the conjectured bound holds asymptotically \cite{29}. 
In particular they showed the following result.
\begin{theorem}(\cite{29})\label{az}
Let $k\in\{3,4,5\}$. 
Then there exists a constant $C$ such that
\begin{equation*}
    W(G)\leq \frac{1}{6k}n^3+C n^{5/2}
\end{equation*}
for every $k$-connected maximal planar graph of order $n$.
\end{theorem}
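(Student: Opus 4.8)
The plan is to bound $W(G)$ by counting, for each threshold $t$, how many ordered pairs of vertices lie at distance at least $t$, and then summing over $t$. Let $\kappa$ denote the connectivity of the maximal planar graph $G$; since a triangulation on at least four vertices is $3$-connected and has a vertex of degree at most $5$, we have $\kappa\in\{3,4,5\}$, and it suffices to establish the bound with $k$ replaced by the exact connectivity $\kappa\ge k$ of $G$, as this only decreases the right-hand side. Writing $P_t$ for the number of \emph{ordered} pairs $(u,v)$ with $d_G(u,v)\ge t$, the identity $d_G(u,v)=\sum_{t\ge 1}\mathbf 1[d_G(u,v)\ge t]$ gives $2W(G)=\sum_{t\ge1}P_t$. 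The heart of the proof will be the quadratic estimate
\begin{equation*}
P_t\ \le\ \big(n-k(t-1)\big)^2+O(n^{3/2}),
\end{equation*}
valid for all $t\ge 1$. Granting this, and recalling (from the next step) that $P_t=0$ once $t$ exceeds the diameter $D\le \tfrac{n-2}{k}+1$, summation over the $O(n/k)$ relevant values of $t$ yields $2W(G)\le \sum_{i=0}^{\lfloor n/k\rfloor}(n-ki)^2+O(n^{5/2})=\tfrac{n^3}{3k}+O(n^{5/2})$, since $\sum_{i=0}^{\lfloor n/k\rfloor}(n-ki)^2=\tfrac{n^3}{3k}+O(n^2)$ by a direct computation; dividing by $2$ gives the theorem with a uniform constant $C$.

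First I would record the elementary consequence of high connectivity that drives everything. Fix a vertex $w$ and let $L_i(w)$ be the set of vertices at distance exactly $i$ from $w$. For $1\le i<\mathrm{ecc}(w)$ the set $L_i(w)$ separates the nonempty sets $\bigcup_{j<i}L_j(w)$ and $\bigcup_{j>i}L_j(w)$, so $k$-connectivity forces $|L_i(w)|\ge k$. Consequently the ball $B_r(w)=\bigcup_{i\le r}L_i(w)$ satisfies $|B_r(w)|\ge 1+kr$ whenever $r<\mathrm{ecc}(w)$, and summing layer sizes gives $\mathrm{ecc}(w)\le \tfrac{n-2}{k}+1$; in particular $D\le \tfrac{n-2}{k}+1$, which fixes the range of summation. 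It also yields the \emph{weak} bound: using only $|B_r(w)|\ge 1+kr$ for each source $w$ gives $P_t\le n\big(n-k(t-1)\big)$ and hence $W(G)\le \tfrac{n^3}{4k}(1+o(1))$, with the wrong constant. The missing factor $\tfrac{2}{3}$ is exactly the gap the quadratic estimate must close.

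The main obstacle is upgrading the linear ball bound $|B_r(w)|\ge 1+kr$ to the quadratic far-pair estimate. Writing $r=t-1$ and using $\sum_{w}|B_r(w)|=n^2-P_{r+1}$, the estimate $P_{r+1}\le (n-kr)^2$ is equivalent to the ball-growth inequality
\begin{equation*}
\sum_{w\in V(G)}|B_r(w)|\ \ge\ 2krn-k^2r^2-O(n^{3/2}).
\end{equation*}
Thus I must show that, \emph{averaged over the source} $w$, balls grow at the doubled rate $2k$ rather than $k$: a vertex in the ``interior'' of the triangulation can expand its ball in two opposite radial directions, gaining $\ge k$ new vertices per step on each side, whereas only the few ``boundary'' sources near a given vertex's antipode are confined to one direction, which is precisely what produces the correction $-k^2r^2$. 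To make the two radial directions rigorous I would exploit the structure of BFS layers in a $k$-connected planar triangulation, where the layers $L_i(w)$ behave like nested separating cycles; this renders the layering essentially one-dimensional, with the triangulated strip $T_n$ as the extremal configuration.

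The hard part will be controlling the deviation of the genuine layering from this idealized radial picture. I expect to bound each layer's departure from the ideal by $O(\sqrt n)$ vertices via a planar separator (or isoperimetric) estimate, and it is the accumulation of these $O(\sqrt n)$ corrections over the $O(n)$ choices of the pair $(w,r)$ that produces the $O(n^{5/2})$ error term. In short, the reductions of the first two paragraphs are routine, and essentially all the difficulty is concentrated in this ``one-dimensionalization'' step: proving that a $k$-connected triangulation cannot spread its distance-mass in more than two radial directions without paying in connectivity. The same argument is uniform in $k$ and so settles the cases $k=3,4,5$ simultaneously.
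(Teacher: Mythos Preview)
This theorem is quoted in the paper as a result of \v{C}abarka, Dankelmann, Olsen and Sz\'ekely (reference [29]); the present paper does \emph{not} supply a proof of it, so there is no ``paper's own proof'' to compare your attempt against. I can therefore only assess your proposal on its own merits.

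Your reductions are sound: the identity $2W(G)=\sum_{t\ge 1}P_t$, the layer bound $|L_i(w)|\ge k$ for non-terminal $i$, the diameter bound, and the weak estimate $P_t\le n(n-k(t-1))$ giving the constant $\tfrac{1}{4k}$ are all correct and standard. The arithmetic showing that the quadratic estimate $P_t\le (n-k(t-1))^2+O(n^{3/2})$ would, after summing over $t\le D=O(n/k)$, deliver $W(G)\le \tfrac{1}{6k}n^3+O(n^{5/2})$ is also fine, and the estimate is indeed sharp on the extremal graph $T_n$.

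The genuine gap is the quadratic estimate itself. You correctly identify that everything hinges on
\[
\sum_{w\in V(G)}|B_r(w)|\ \ge\ 2krn-k^2r^2-O(n^{3/2}),
\]
i.e.\ that balls grow on average at rate $2k$ rather than $k$, but you do not prove it. The mechanism you sketch---``bound each layer's departure from the ideal by $O(\sqrt n)$ via a planar separator (or isoperimetric) estimate''---is not a proof and, as stated, is not clearly even the right shape: the separator theorem produces one small cut, it does not say that every BFS layer from every root is within $O(\sqrt n)$ of any prescribed ``ideal'' size, nor does it by itself explain why a typical vertex should have two rather than one ``radial direction'' contributing $k$ vertices per step. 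The informal dichotomy you have in mind (if separators are small the graph is path-like and close to $T_n$; if they are large, balls grow super-linearly and the bound is slack) is plausible heuristics, but turning it into the uniform inequality above is exactly the content of the theorem, and your proposal leaves that step entirely open. As written, the proposal is an outline whose hard part---by your own admission, ``essentially all the difficulty''---is missing.
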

In this paper, we confirm the conjecture. 
\begin{theorem}\label{Main_Theorem}
Let $G$ be an $n$, $n\geq 6$, vertex, maximal, planar graph. Then we have,

\begin{equation*}W(G)\leq \bigg\lfloor\frac{1}{18}(n^3+3n^2)\bigg\rfloor=
\begin{cases}
\frac{1}{18}(n^3+3n^2), &\text{if $n\equiv 0(mod \ 3)$;}\\
\frac{1}{18}(n^3+3n^2-4), &\text{if $n\equiv 1(mod \ 3)$;}\\
\frac{1}{18}(n^3+3n^2-2), &\text{if $n\equiv 2(mod \ 3)$.}
\end{cases}
\end{equation*}

Equality holds if and only if $G$ is isomorphic to $T_n$ for all $n$, $n\geq 10$.
\end{theorem}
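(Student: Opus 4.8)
The plan is to begin by fixing the extremal object: the graph $T_n$ of Definition~\ref{xx} is, up to the low-order adjustment forced by $n\bmod 3$, a linear chain of triangulated blocks glued along triangles, so that distances grow like $\lceil\,\cdot\,/3\rceil$ along it; one computes $W(T_n)=\bigl\lfloor\frac1{18}(n^3+3n^2)\bigr\rfloor$ directly by summing $d(u,v)$ over all pairs, which shows the bound is best possible and reduces the theorem to the upper bound together with the uniqueness statement. I would prove the upper bound by induction on $n$, organised around separating triangles, since $T_n$ is built precisely by repeatedly gluing along triangles.

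Two elementary structural facts supply all the planarity input. A maximal planar graph on $n\ge 4$ vertices is $3$-connected, so in the BFS layering $L_0,\dots,L_D$ from any vertex each internal layer $L_i$ ($1\le i\le D-1$) separates $L_{<i}$ from $L_{>i}$ and hence satisfies $|L_i|\ge 3$; consequently $\operatorname{diam}(G)\le\lfloor(n+1)/3\rfloor$, and the number of vertices at distance $\ge i$ from a fixed vertex is at most $n-3i+2$ (and $0$ once this is negative). Summing this crude estimate over $i$ and over all vertices already yields $W(G)\le\frac1{12}n^3(1+o(1))$. The entire difficulty is to sharpen the constant from $\frac1{12}$ to $\frac1{18}$: the per-vertex estimate is tight only for a vertex sitting at the end of a long thin tube, and one must exploit that very few vertices can do so simultaneously.

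For the sharp constant I would run the following reduction. If $G$ has no separating triangle — the $4$-connected case, plus the trivial $K_4$ — it is disposed of using Theorem~\ref{az} with $k=4$ (giving $W(G)\le\frac1{24}n^3+Cn^{5/2}$, below the target for all large $n$) together with a direct analysis of the finitely many small orders (or a finer version of the layer estimate); in particular an extremal graph of large order must contain a separating triangle. If $T=\{x,y,z\}$ is a separating triangle, write $G=G_1\oplus_T G_2$ with $|G_i|=n_i$, $n_1+n_2=n+3$, each $G_i$ a maximal planar graph having $T$ as a facial triangle. Since $T$ separates, $d_G(u,v)=d_{G_i}(u,v)$ for $u,v$ on the same side, and a short argument gives
\[
W(G)=W(G_1)+W(G_2)-3+\sum_{u\in V(G_1)\setminus T}\ \sum_{v\in V(G_2)\setminus T}\ \min_{t\in T}\bigl(d_{G_1}(u,t)+d_{G_2}(t,v)\bigr).
\]
The cross term is governed by the \emph{profiles} $\bigl(d_H(u,t)\bigr)_{t\in T}$ of the vertices of each side towards the root triangle; because $x,y,z$ are mutually adjacent these three entries lie within $1$ of one another, so at each distance there are only boundedly many profile types, and the natural invariant to carry through the induction is, for each rooted block $H$, the list of how many vertices have each profile type at each distance, together with $W(H)$. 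Subject to these invariants obeying the $3$-connectivity constraint $|L_i^T|\ge 3$ for the layering around $T$, one shows the displayed quantity is maximised when both $G_1$ and $G_2$ are themselves rooted copies of the tube, leaving a one-dimensional optimisation over the split $n_1+n_2=n+3$ whose optimum is the unbranched chain $T_n$.

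I expect the cross term to be the main obstacle. The naive bound $\min_t\bigl(d_{G_1}(u,t)+d_{G_2}(t,v)\bigr)\le\min_t d_{G_1}(u,t)+\min_{t'}d_{G_2}(v,t')+1$ loses an additive $1$ on the $\Theta(n^2)$ pairs that are close across $T$, which already destroys any hope of identifying the \emph{unique} extremizer; one is therefore forced to keep the full profile information and to show that every deviation of a block from a rooted tube — an internal layer of size $>3$, or a ``branch'', i.e.\ a further separating triangle not aligned with the chain — strictly decreases $W$, by re-embedding the offending block as a linear extension of the chain and tracking how all pairwise distances change. That distance bookkeeping, carried out uniformly over all maximal planar graphs, is the delicate and computation-heavy core. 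Finally, the threshold $n\ge 10$ in the uniqueness statement reflects that for $6\le n\le 9$ certain small maximal planar graphs (relatives of the octahedron and the icosahedron) tie with or beat $T_n$; these orders are settled by a finite computation, which also serves as the base of the induction.
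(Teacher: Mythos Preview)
Your overall architecture --- induction on $n$, BFS layers of size at least $3$, and decomposition along a separating triangle --- matches the paper's, but the two proofs diverge at exactly the points you flag as hard, and your proposal leaves the core step undone. For the case with no separating triangle you invoke Theorem~\ref{az} with $k=4$; that bound carries an unspecified constant $C$, so ``direct analysis of the finitely many small orders'' is not a finite computation you can actually perform. The paper avoids this entirely: if $G$ is $5$-connected, Lemma~\ref{Five_Connected_Lemma} alone gives $W(G)\le\frac{n}{20}(n^2+3n+2)$, already below the target; if $G$ is $4$- but not $5$-connected, the paper takes a $4$-cut (a $4$-cycle by Lemma~\ref{S_Connected_Cycle_lemma}), splits along it, adds one diagonal chosen to lose at most $x^2/16$ in Wiener index, and bounds the two pieces by induction plus Lemmas~\ref{Four_Connected_Lemma} and~\ref{Five_Connected_Lemma}. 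No asymptotic black box is used.

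For the separating-triangle case the paper does not track three-coordinate profiles or re-embed blocks. The device you are missing is this: choose the separating triangle whose \emph{inner region has minimal size} $x$. Minimality of $x$ means there is no further separating triangle strictly inside, which forces the BFS levels rooted at the cut (or at a pair from the cut) inside the small piece to have size at least $4$ or $5$ rather than just $3$; that upgrade is exactly what sharpens the cross-term estimate and makes the naive bound $\min_t d_{G_1}(u,t)+\min_{t'} d_{G_2}(v,t')+\text{const}$ adequate. The paper then treats $x\in\{1,3,4,5\}$ and $x\ge 6$ by separate one-page calculus computations; equality survives only in the sub-case $x=1$, where the induction hypothesis identifies $G_{n-1}\cong T_{n-1}$ and a direct check forces $G\cong T_n$. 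Your ``profile/re-embedding'' program may be workable, but as written it is an assertion (``one shows the displayed quantity is maximised when both $G_i$ are rooted tubes'') rather than an argument, and the paper shows that this machinery is unnecessary once one exploits minimality of $x$.
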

\section{Notations and Preliminaries}
Let $G$ be a graph. We denote vertex set and edge set of $G$ by $V(G)$ and $E(G)$ respectively.
 For a vertex set  $S \subset V(G)$, the \textit{status} of  $S$ is defined as the sum of all distances from $S$ to all vertices of the graph. It is denoted by $\sigma_G(S)$, thus
\begin{align*}
    \sigma_G(S)=\sum_{u\in V(G)}d_G(S,u).
\end{align*}
For simplicity, we may use the notation $\sigma(S)$ instead of $\sigma_G(S)$ when the underlined graph is clear.
We have,
\begin{equation*}
    W(G) = \frac{1}{2} \sum_{v \in V(G)} \sigma_G(v).
\end{equation*}

Here we are defining an Apollonian network $T_n$ on $n$ vertices. We will prove later that it is the unique  maximal planar graph which maximizes the Wiener index.
\begin{definition}\label{xx}
The Apollonian network $T_n$ is the maximal planar graph on $n\geq 3$ vertices, with the following structure, see Figure \ref{apollonian}.

If $n$ is a multiple of $3$, then the  vertex set of $T_n$ can be  partitioned in three sets of same size, $A=\{a_1,a_2,\cdots,a_k\}$, $B=\{b_1, b_2,\dots,b_k\}$ and $C=\{c_1,c_2,\cdots,c_k\}$. The edge set of $T_n$ is the union of following three sets
$E_1=\bigcup_{i=1}^{k}\{(a_i,b_i), (b_i,c_i), (c_i,a_i)\}$ forming concentric triangles,
$E_2=\bigcup_{i=1}^{k-1} \{(a_i,b_{i+1}), (a_i,c_{i+1}), (b_i,c_{i+1})\}$ forming `diagonal' edges, and $E_3= \bigcup_{1}^{k-1} \{(a_i,a_{i+1}),\\ (b_i,b_{i+1}), (c_i,c_{i+1})\}$ forming paths in each vertex class, see Figure \ref{a}. 
Note, that there are two triangular faces $a_1,b_1,c_1$ and $a_k,b_k,c_k$.

If $3|n-1$, then $T_n$ is the Apollonian network which may be obtained from $T_{n-1}$ by adding a degree three vertex in the face $a_1,b_1,c_1$ or $a_{\frac{n-1}{3}},b_{\frac{n-1}{3}},c_{\frac{n-1}{3}}$, see Figure \ref{b}.  Note that both graphs are isomorphic.

If $3|n-2$, then $T_n$ is the Apollonian network which may be obtained from $T_{n-2}$ by adding a degree three vertex in each of the faces $a_1,b_1,c_1$ and $a_{\frac{n-1}{3}},b_{\frac{n-1}{3}},c_{\frac{n-1}{3}}$, see Figure \ref{c}.
\end{definition}

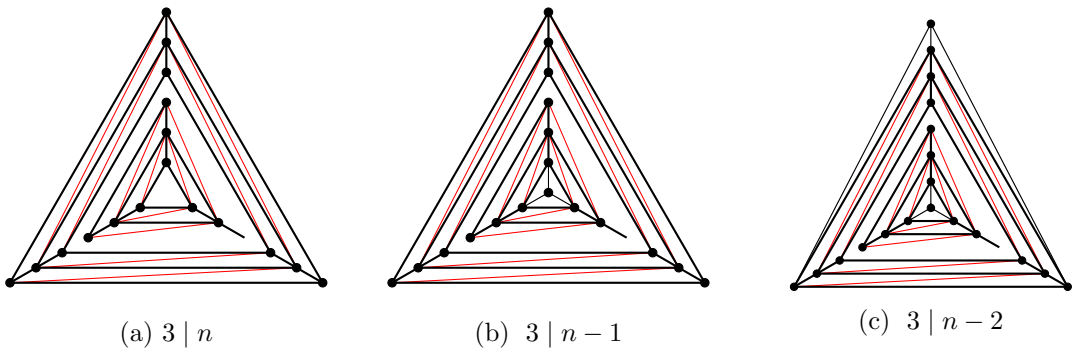
\begin{figure}[h]
\begin{subfigure}{0.3\linewidth}
\centering
\begin{tikzpicture}[scale=.4]
\foreach \x in {1,2,4,5}{
\draw[thick]  (90:\x) -- (90:{\x+1}) (-30:\x) -- (-30:{\x+1})  (210:\x) -- (210:{\x+1});
\draw[red] (90:{\x+1}) --  (-30:\x) -- (210:{\x+1})  (210:\x) -- (90:{\x+1});
}
\foreach \x in {1,2,4,5,...,6}{
\filldraw (90:\x) circle (4pt)  (-30:\x) circle (4pt) (210:\x) circle (4pt);
\draw[thick] (90:\x) --  (-30:\x) -- (210:\x) -- (90:\x);
}
\filldraw (90:3) circle (4pt) (210:3)  circle (4pt);
\draw[thick] (90:3) -- (210:3);
 \draw (0,-3) node[below]{};
\end{tikzpicture}
\caption{$3\mid n$}
\label{a}
\end{subfigure}
\begin{subfigure}{0.3\linewidth}
\centering
\begin{tikzpicture}[scale=.4]
\filldraw (210:1) -- (0,0) circle (4pt) -- (90:1) (-30:1) -- (0,0);
\foreach \x in {1,2,4,5}{
\draw[thick]  (90:\x) -- (90:{\x+1}) (-30:\x) -- (-30:{\x+1})  (210:\x) -- (210:{\x+1});
\draw[red] (90:{\x+1}) --  (-30:\x) -- (210:{\x+1})  (210:\x) -- (90:{\x+1});
}
\foreach \x in {1,2,4,5,...,6}{
\filldraw (90:\x) circle (4pt)  (-30:\x) circle (4pt) (210:\x) circle (4pt);
\draw[thick] (90:\x) --  (-30:\x) -- (210:\x) -- (90:\x);
}
\filldraw (90:3) circle (4pt) (210:3)  circle (4pt);
\draw[thick] (90:3) -- (210:3);
 \draw (0,-3) node[below]{};
\end{tikzpicture}
\caption{$\; 3\mid n-1$}
\label{b}
\end{subfigure}
\begin{subfigure}{0.3\linewidth}
\centering
\begin{tikzpicture}[scale=.35]
\filldraw (210:6) -- (0,7) circle (4pt) -- (-30:6) (0,7) -- (0,6);
\filldraw (210:1) -- (0,0) circle (4pt) -- (90:1) (-30:1) -- (0,0);
\foreach \x in {1,2,4,5}{
\draw[thick]  (90:\x) -- (90:{\x+1}) (-30:\x) -- (-30:{\x+1})  (210:\x) -- (210:{\x+1});
\draw[red] (90:{\x+1}) --  (-30:\x) -- (210:{\x+1})  (210:\x) -- (90:{\x+1});
}
\foreach \x in {1,2,4,5,...,6}{

\filldraw (90:\x) circle (4pt)  (-30:\x) circle (4pt) (210:\x) circle (4pt);
\draw[thick] (90:\x) --  (-30:\x) -- (210:\x) -- (90:\x);
}
\filldraw (90:3) circle (4pt) (210:3)  circle (4pt);
\draw[thick] (90:3) -- (210:3);
\end{tikzpicture}
\caption{$ \; 3 \mid n-2$}
\label{c}
\end{subfigure}
\caption{Apollonian networks maximizing Wiener index of maximal planar graphs}
\label{apollonian}
\end{figure}

The following lemmas will be used in the proof of Theorem \ref{Main_Theorem}.

\begin{lemma}\label{S_Connected_Cycle_lemma}
Let $G$ be a $s$-connected, maximal planar graph. Then every cut set of size $s$ contains a Hamiltonian cycle of length $s$.
\end{lemma}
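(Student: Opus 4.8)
The plan is to show that the induced subgraph $G[S]$ contains a spanning cycle, where $S$ is any cut set with $|S|=s$. The argument combines three ingredients: a counting argument that controls the number of components of $G-S$, the planarity of $K_{2,s}$, and the maximality (triangulation) hypothesis. Throughout we may assume $s\ge 3$, since a maximal planar graph on at least four vertices is $3$-connected (and on fewer vertices there is no cut set, so the statement is vacuous). First I would record that $S$ is a \emph{minimum} cut and that for every component $C$ of $G-S$ one has $N_G(C)=S$: if $N_G(C)\subsetneq S$, then $N_G(C)$ separates $C$ from the nonempty remainder $(G-S)\setminus C$ using fewer than $s$ vertices, contradicting $s$-connectivity. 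Consequently, contracting each component of $G-S$ to a single vertex produces a planar graph containing $K_{m,s}$ with parts the $m$ contracted vertices and $S$; since $s\ge 3$, planarity of $K_{m,s}$ forces $m\le 2$, so $G-S$ has exactly two components, say $A$ and $B$.

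Next I would use planarity to bound $G[S]$ from above. Let $G'=G/A/B$ be the planar graph obtained by contracting $A$ to a vertex $a$ and $B$ to a vertex $b$ (passing to the underlying simple graph); then $a$ and $b$ are each adjacent to every vertex of $S$ while $a\not\sim b$, so $G'\supseteq K_{2,s}$ with parts $\{a,b\}$ and $S$. In the inherited plane embedding, the vertices of $S$ occur in a cyclic order $v_1,\dots,v_s$ such that the faces of this $K_{2,s}$ are the quadrilaterals $Q_i=av_ibv_{i+1}$ (indices modulo $s$); every remaining edge of $G'$, in particular every edge of $G[S]$, lies inside some $Q_i$ and hence joins two of $\{v_i,v_{i+1}\}$. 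Thus $G[S]$ is a subgraph of the cycle $C=v_1v_2\cdots v_sv_1$, and it remains only to prove the reverse inclusion, i.e.\ that each $v_iv_{i+1}$ is an edge of $G$.

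Here maximality enters, and this is the step I expect to be the main obstacle. Suppose $v_iv_{i+1}\notin E(G)$, so that $Q_i$ is a face of $G'$. Un-contracting $a$ and $b$, the corresponding region of the plane is a closed disk containing no vertex of $G$ in its interior, subdivided into triangular faces of $G$, whose boundary walk has the form $v_i,(\text{a nonempty run of vertices of }B),v_{i+1},(\text{a nonempty run of vertices of }A),v_i$, in which no vertex of the $A$-run is adjacent to a vertex of the $B$-run (as $G$ has no $A$--$B$ edge). I claim such a triangulated disk must contain the chord $v_iv_{i+1}$: the triangle of $G$ using the boundary edge from $v_{i+1}$ into the $A$-run has a third vertex that cannot lie in $B$, hence lies in the $A$-run or is $v_i$; inducting on the length of the $A$-run and peeling off such triangles eventually forces the chord $v_iv_{i+1}$ to appear. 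This contradicts our assumption, so $C\subseteq G[S]$ and $C$ is the desired Hamiltonian cycle on the cut set $S$. The delicate point is verifying that the blown-up region of $Q_i$ really is a triangulated disk with exactly this boundary structure and no interior vertices — which is where both the triangulation hypothesis and the fact that $A$ and $B$ are the only two components are genuinely used — after which the polygon argument is elementary.
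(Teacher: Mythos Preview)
Your argument is correct, but it follows a genuinely different route from the paper's. The paper proceeds via Menger's theorem: choosing vertices $u,w$ in different components of $G-S$, it obtains $s$ internally disjoint $u$--$w$ paths, each meeting $S$ in a single vertex; the planar embedding then imposes a cyclic order $v_{i_1},\dots,v_{i_s}$ on these paths, and maximality is invoked to show that if $v_{i_x}v_{i_{x+1}}$ were absent, one could traverse the triangulated region between the two consecutive paths to get a $u$--$w$ path avoiding $S$. Your approach instead first pins down the global structure---$G-S$ has exactly two components (via planarity of $K_{m,s}$), and $G[S]$ is contained in a specific $s$-cycle (via the face structure of $K_{2,s}$ after contraction)---and only then uses maximality, through a triangulated-disk peeling argument, to force each cycle edge.

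The two routes converge at the same endgame (``missing edge $\Rightarrow$ illegal path through a triangulated region''), but your setup yields strictly more: you show $G[S]$ equals the cycle, not merely contains it, and you make the two-component structure explicit, both of which are used tacitly later in the paper. The paper's Menger-based setup is shorter and sidesteps the contraction/un-contraction bookkeeping you flag as the delicate point; conversely, your argument makes the role of planarity of $K_{2,s}$ and $K_{3,3}$ transparent and avoids appealing to Menger. Your acknowledged delicate step (that the blown-up quadrilateral is a triangulated disk with the stated boundary and no interior vertices) is genuine but routine once one tracks the rotation systems through the contraction, so there is no real gap.
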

\begin{proof}
Let us assume that a cut set of size $s$ is $S=\{v_1,v_2,\dots,v_s\}$. Let $u$ and $w$ be two vertices such that any path from $u$ to $w$ contains at least one vertex from $S$. 
Since $G$ is $s$-connected, by Menger's Theorem, there are $s$ vertex disjoint paths from $u$ to $w$. 
Each of the paths intersects $S$ in disjoint nonempty sets, therefore each of the paths contain exactly one vertex from $S$. 
We may assume, that in a particular planar embedding of $G$, those paths are ordered in such a way that one of the two regions determined by the cycle obtained from two paths from $u$ to $w$  containing $v_{i_x}$ and ${v_{i_{x+1}}}$ has no vertex from $S$  (where indices are taken modulo $s$), see Figure \ref{S_connected_Picture}. 
Then we must have the edges $\{v_{i_x},v_{i_{x+1}}\}$, otherwise, from the maximality of the planar graph, there is a path from the vertex $u$ to the vertex $w$ that does not contain a vertex from $S$, a contradiction. 
Therefore we have a cycle of length $s$ on the vertex set $S$, $v_{i_{1}},v_{i_{2}},\cdots,v_{i_{s}},v_{i_{1}}$.
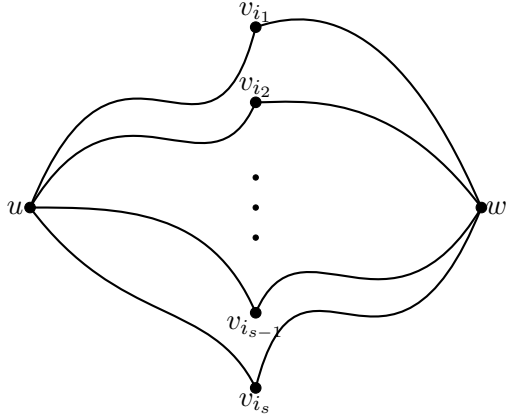
\begin{figure}[h]
\centering
\begin{tikzpicture}[scale=0.2]
\draw[fill=black](-15,0)circle(10pt);
\draw[fill=black](15,0)circle(10pt);
\draw[fill=black](0,12)circle(10pt);
\draw[fill=black](0,-12)circle(10pt);
\draw[fill=black](0,7)circle(10pt);
\draw[fill=black](0,-7)circle(10pt);
\draw[fill=black](0,2)circle(5pt);
\draw[fill=black](0,-2)circle(5pt);
\draw[fill=black](0,0)circle(5pt);
\draw[black,thick](-15,0)..controls (-9,15) and (-3,0) .. (0,12);
\draw[black,thick](-15,0)..controls (-9,10) and (-3,0) .. (0,7);
\draw[black,thick](-15,0)..controls (-9, -8) and (-3,-6) .. (0,-12);
\draw[black,thick](-15,0)..controls (-9,0) and (-3,0) .. (0,-7);
\draw[black,thick](15,0)..controls (9,-15) and (3,0) .. (0,-12);
\draw[black,thick](15,0)..controls (9,-10) and (3,0) .. (0,-7);
\draw[black,thick](15,0)..controls (9,8) and (3,7) .. (0,7);
\draw[black,thick](15,0)..controls (9,14) and (3,13) .. (0,12);
\node at (-16,0) {$u$};
\node at (16,0) {$w$};
\node at (0,13) {$v_{i_1}$};
\node at (0,8) {$v_{i_2}$};
\node at (0,-8) {$v_{i_{s-1}}$};
\node at (0,-13) {$v_{i_s}$};
\end{tikzpicture}
\caption{$s$ pairwise disjoint paths from $u$ to $w$.}
\label{S_connected_Picture}
\end{figure}
\end{proof}

The following definition would be particularly helpful. Given a set $S\subseteq V$, we define the Breadth First Search partition of $V$ with root $S$,  $\pa{S}^G$ or simply $\pa{S}$ when the underline graph is clear, by $\pa{S} = \{S_0,S_1,\dots\}$, where $S_0 = S$, and for $i \geq 1$, $S_i$ is the set of vertices at distance exactly $i$ from $S$. 
We refer to those sets as \emph{levels} (of $\pa{s}$), $S_1$ is the \emph{first level}, and if $k$ is the largest integer such that $S_k \neq \emptyset$, the we refer to $S_k$ as the \emph{last level}. 
We refer to $S_0$ and the last level as \emph{terminal level}.
Note that by definition every level beside the first and last is a cut set of $G$. We denote by $\pa{v}$ the Breadth First Search partition from $v$, that is the partition $\pa{\{v\}}.$ 


\begin{lemma}\label{Three_Connected_Lemma}
Let $G$ be an $n+s$ vertex graph and $S$, $S\subset V(G)$, be a set of vertices of size $s$. Such that each non-terminal level of $\pa{S}$ has size  at least $3$. Then  we have
\begin{equation*}
\sigma(S)\leq \sigma_3(n):= \begin{cases}
\frac{1}{6}(n^2+3n), &\text{if $n\equiv 0 \; (mod \ 3)$;}\\
\frac{1}{6}(n^2+3n+2), &\text{if $n\equiv 1,2 \; (mod \ 3)$.}
\end{cases}
\end{equation*}
\end{lemma}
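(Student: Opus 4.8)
The plan is to bound $\sigma(S)$ by summing the distance contributions level by level. Write $\pa{S} = \{S_0, S_1, \dots, S_k\}$ with $|S_0| = s$ and $S_k$ the last level. Each vertex in $S_i$ contributes exactly $i$ to $\sigma(S)$, so $\sigma(S) = \sum_{i=1}^{k} i\,|S_i|$. We are given $|S_i| \geq 3$ for every non-terminal level, i.e. for $1 \leq i \leq k-1$. The terminal levels $S_0$ (which contributes $0$) and $S_k$ (which contributes $k|S_k|$) need separate handling: $S_k$ may be small, but it is exactly one level "further out" so a small last level actually helps us. Set $m = \sum_{i=1}^{k}|S_i| = n$ (the $n$ vertices outside $S$). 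The question is: over all ways of writing $n$ as an ordered sum $|S_1| + \dots + |S_k|$ with $|S_i| \geq 3$ for $i < k$ and $|S_k| \geq 1$, how large can $\sum_i i\,|S_i|$ be?

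First I would argue that the extremal configuration makes all non-terminal levels as small as possible, namely $|S_i| = 3$ for $1 \leq i \leq k-1$: if some interior level had more than $3$ vertices, moving one unit of "mass" from an inner level to an outer level strictly increases $\sum i |S_i|$, and one checks this can be done while preserving the size-$\geq 3$ constraints (in the worst case one creates a new outermost level). So we may assume $|S_i| = 3$ for $i < k$ and $|S_k| = n - 3(k-1) =: r$ with $1 \leq r$; note we also need $r$ to be a legitimate last-level size, but since $S_k$ is terminal there is no lower bound beyond $r \geq 1$ (and if it helped to have $r \geq 3$ we'd already be in the all-3's case). Then
\begin{equation*}
\sigma(S) \leq \sum_{i=1}^{k-1} 3i + k r = 3\cdot\frac{(k-1)k}{2} + k\big(n - 3(k-1)\big) = kn - \frac{3k(k-1)}{2}.
\end{equation*}
Now optimize $f(k) = kn - \tfrac{3}{2}k(k-1)$ over the positive integers $k$; treating $k$ as real, $f'(k) = n - 3k + \tfrac32 = 0$ gives $k \approx n/3 + 1/2$. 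Plugging $k = \lceil n/3 + 1/2 \rceil$ or the nearby integers and simplifying by cases according to $n \bmod 3$ should yield exactly the claimed $\sigma_3(n)$: for $n \equiv 0$ one gets $\tfrac16(n^2+3n)$, and for $n \equiv 1, 2$ one gets $\tfrac16(n^2+3n+2)$.

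The main obstacle I anticipate is not the optimization — that is a routine discrete calculus exercise — but rather the rigorous justification of the "push mass outward" reduction while respecting the constraint that \emph{every} interior level has size at least $3$. One has to be careful that decreasing an interior level to size $3$ does not drop it (or a later level) below $3$, and that the value of $k$ used in the final optimization is actually achievable given $n$; in particular one should double-check the small and boundary cases (e.g. $k=1$, meaning $S_1 = S_k$ is the only nonempty level, giving $\sigma(S) \leq n$, and whether $n - 3(k-1) \geq 1$ forces $k \leq \lceil n/3\rceil$, possibly cutting off the unconstrained optimum and requiring us to take the best \emph{feasible} integer $k$). Handling these edge cases cleanly, and confirming the parity/residue bookkeeping matches the stated piecewise formula, is where the real care is needed.
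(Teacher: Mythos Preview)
Your proposal is correct and follows essentially the same idea as the paper: write $\sigma(S)=\sum_i i\,|S_i|$ and bound it by the configuration in which every non-terminal level has exactly three vertices, with the remainder placed in the last level. The paper's proof is much terser --- it simply asserts
\[
\sigma(S)\;\le\;3\Bigl(1+2+\cdots+\Bigl\lfloor\tfrac{n}{3}\Bigr\rfloor\Bigr)+\Bigl(\Bigl\lfloor\tfrac{n}{3}\Bigr\rfloor+1\Bigr)\Bigl(n-3\Bigl\lfloor\tfrac{n}{3}\Bigr\rfloor\Bigr)=\sigma_3(n)
\]
without spelling out the exchange argument you describe; your ``push mass outward'' step is exactly the justification this line needs, and your concerns about it are unfounded: moving one vertex from an interior level $S_j$ with $|S_j|>3$ to $S_k$ preserves all constraints and strictly increases the sum, and once every interior level has size $3$ you can further split $S_k$ whenever $|S_k|\ge 4$, which forces $|S_k|\in\{1,2,3\}$ and hence pins down $k$ uniquely by the residue of $n$ modulo $3$ --- so the separate optimization over $k$ is in fact unnecessary, though it does no harm.
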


\begin{proof}

If $\pa{S} = \{S_0,S_1,\dots\}$, by definition, we have that $\sigma(S) = \displaystyle\sum_{i} i\abs{S}.$ 
Therefore 
\begin{align*}
\sigma(S) &= \abs{S_1} + 2\abs{S_2} + 3\abs{S_3} + \cdots \\&\leq 3\bigg(1+2+\dots+ \floor{\frac{n}{3}}\bigg)+\bigg(\floor{\frac{n}{3}}+1 \bigg)\bigg(n-3 \floor{\frac{n}{3}}\bigg)= \sigma_3(n). \tag*{\qedhere}
\end{align*}

\end{proof}

Similarly we can prove the following Lemmas.

\begin{lemma}\label{Four_Connected_Lemma}
Let $G$ be an $n+s$ vertex graph and $S$, $S\subset V(G)$, be a set of vertices of size $s$. Such that each non terminal level of $\pa{S}$ has size  at least $4$. Then we have
\begin{equation*}
\sigma(S)\leq \sigma_4(n):= \begin{cases}
\frac{1}{8}(n^2+4n), &\text{if $n\equiv 0 \;(mod \ 4)$;}\\
\frac{1}{8}(n^2+4n+3), &\text{if $n\equiv 1,3 \; (mod \ 4)$;}\\
\frac{1}{8}(n^2+4n+4), &\text{if $n\equiv 2 \;(mod \ 4)$.}
\end{cases}
\end{equation*}
\end{lemma}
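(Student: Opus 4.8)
The plan is to mimic the proof of Lemma \ref{Three_Connected_Lemma} essentially verbatim, replacing the threshold $3$ by $4$. Writing $\pa{S} = \{S_0, S_1, S_2, \dots\}$, by definition of the BFS partition every vertex of the graph other than those in $S$ lies in some level $S_i$ with $i \geq 1$, and contributes exactly $i$ to $\sigma(S)$, so $\sigma(S) = \sum_{i \geq 1} i\,\abs{S_i}$. There are $n$ such vertices in total. The hypothesis says that every non-terminal level — that is, every level $S_i$ with $1 \leq i \leq k-1$, where $S_k$ is the last nonempty level — has $\abs{S_i} \geq 4$. To maximize $\sigma(S) = \sum_{i} i\,\abs{S_i}$ subject to $\sum_{i \geq 1} \abs{S_i} = n$ and $\abs{S_i} \geq 4$ for all $i$ below the last one, we should push as much ``mass'' as possible to the highest-indexed levels, while respecting the lower bound $4$ on all earlier levels. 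Intuitively, the extremal configuration puts exactly $4$ vertices in each of levels $1, 2, \dots, q$ and the remaining $n - 4q$ vertices in level $q+1$, where $q = \floor{n/4}$; one checks this is optimal by the standard exchange argument (moving a vertex from a lower level to a higher level never decreases the weighted sum, as long as no earlier level drops below $4$).

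Carrying this out, the bound becomes
\begin{equation*}
\sigma(S) \;\leq\; 4\bigl(1 + 2 + \cdots + \floor{\tfrac{n}{4}}\bigr) + \bigl(\floor{\tfrac{n}{4}} + 1\bigr)\bigl(n - 4\floor{\tfrac{n}{4}}\bigr).
\end{equation*}
It then remains only to evaluate this expression in the four residue classes of $n$ modulo $4$. Writing $n = 4q + r$ with $r \in \{0,1,2,3\}$, the first term is $4 \cdot \frac{q(q+1)}{2} = 2q(q+1)$ and the second is $(q+1)r$, so $\sigma(S) \leq 2q^2 + 2q + (q+1)r$. Substituting $q = (n-r)/4$ and simplifying in each case yields $\frac{1}{8}(n^2 + 4n)$ when $r = 0$, $\frac{1}{8}(n^2 + 4n + 3)$ when $r = 1$, $\frac{1}{8}(n^2 + 4n + 4)$ when $r = 2$, and $\frac{1}{8}(n^2 + 4n + 3)$ when $r = 3$ — matching the claimed $\sigma_4(n)$. (The $r=1$ and $r=3$ cases coincide, which is why they are grouped in the statement.)

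There is no real obstacle here: the only point needing a word of care is the justification that the ``$4,4,\dots,4,\text{rest}$'' distribution is genuinely the maximizer — i.e.\ that one cannot do better by, say, making the last level larger at the expense of overshooting $4$ somewhere earlier, which is impossible since the constraint only bounds levels from below, so any excess beyond $4$ in an early level can be shifted forward to the last level without violating feasibility and only increases the weighted sum. This is exactly the reasoning implicit in the proof of Lemma \ref{Three_Connected_Lemma}, which is why the authors simply write ``Similarly we can prove the following Lemmas.'' The arithmetic verification across residue classes is routine.
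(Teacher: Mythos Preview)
Your proof is correct and follows exactly the approach the paper intends: it is the same argument as for Lemma~\ref{Three_Connected_Lemma} with $3$ replaced by $4$, together with the routine residue-class arithmetic the paper omits. This is precisely why the authors write ``Similarly we can prove the following Lemmas.''
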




\begin{lemma}\label{Five_Connected_Lemma}
Let $G$ be an $n+s$ vertex graph and $S$, $S\subset V(G)$, be a set of vertices of size $s$. Such that each non terminal level of $\pa{S}$ has size  at least $5$. Then we have
\begin{align*}
\sigma(S)\leq \sigma_5(n):= \begin{cases}
\frac{1}{10}(n^2+5n), &\text{if $n\equiv 0 \;(mod \ 5)$;}\\
\frac{1}{10}(n^2+5n+4), &\text{if $n\equiv 1,4\; (mod \ 5)$;}\\
\frac{1}{10}(n^2+5n+6), &\text{if $n\equiv 2, 3\; (mod \ 5)$.} 
\end{cases}
\end{align*}
\end{lemma}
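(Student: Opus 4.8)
The plan is to run exactly the argument in the proof of Lemma~\ref{Three_Connected_Lemma}, with the threshold $3$ replaced by $5$. Write $\pa{S} = \{S_0, S_1, \dots, S_k\}$ with $\abs{S_0} = s$ and $S_k$ the last non-empty level, so $\sum_{i=1}^{k}\abs{S_i} = n$ and, straight from the definition of the levels, $\sigma(S) = \sum_{i=1}^{k} i\,\abs{S_i}$. The hypothesis says precisely that $\abs{S_i} \geq 5$ for every non-terminal level, i.e.\ for $1 \leq i \leq k-1$; the terminal levels $S_0$ and $S_k$ are exempt. Hence it is enough to bound
\[ \max\Bigl\{\, \sum_{i=1}^{k} i\,a_i \;:\; a_i \geq 5 \text{ for } 1 \leq i \leq k-1,\ a_k \geq 1,\ \sum_{i=1}^k a_i = n \,\Bigr\}. \]

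First I would fix the number of levels $k$ and optimize the sizes $a_i$. Substituting $a_k = n - \sum_{i<k} a_i$ gives $\sum_{i} i\,a_i = kn - \sum_{i<k}(k-i)a_i$, and since $k - i > 0$ this is largest when every non-terminal level is as small as permitted, $a_i = 5$ for $i < k$; the resulting value is $f(k) := kn - 5\binom{k}{2}$, and such a $k$ is feasible exactly when $n - 5(k-1) \geq 1$. Next I would maximize $f$ over $k$: throughout the feasible range the successive differences $f(k+1) - f(k) = n - 5k$ are positive, so $f$ is increasing and is maximized at the largest feasible value $k = \ceil{n/5}$ --- i.e.\ at the distribution with $\floor{n/5}$ full levels of size $5$ and any remaining vertices forming one more level. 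In the compressed style of the excerpt this is simply
\[ \sigma(S) = \abs{S_1} + 2\abs{S_2} + 3\abs{S_3} + \cdots \;\leq\; 5\Bigl(1 + 2 + \cdots + \floor{\tfrac n5}\Bigr) + \Bigl(\floor{\tfrac n5}+1\Bigr)\Bigl(n - 5\floor{\tfrac n5}\Bigr) \;=\; \sigma_5(n), \]
where the final equality is verified by setting $n = 5q + r$ with $0 \leq r \leq 4$, rewriting the middle expression as $\tfrac12(q+1)(5q+2r)$, and matching it against the three cases of $\sigma_5(n)$ --- a routine computation I would not write out.

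The only point that needs a moment's care --- the ``hard part'', such as there is one --- is the first inequality: one must check that the exemption of the terminal levels $S_0$ and $S_k$ from the size-$5$ requirement cannot be exploited to beat the greedy configuration. This is exactly what the fixed-$k$ optimization settles, since there $a_k$ is already allowed to be an arbitrary positive integer (in particular smaller than $5$). Everything else is bookkeeping, and the identical template with $5$ replaced by $3$, respectively $4$, reproves Lemma~\ref{Three_Connected_Lemma} and Lemma~\ref{Four_Connected_Lemma} --- which is why the excerpt groups the three statements.
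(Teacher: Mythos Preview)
Your proposal is correct and follows exactly the approach the paper intends: the paper proves Lemma~\ref{Three_Connected_Lemma} by the one-line greedy bound and then simply says ``Similarly we can prove the following Lemmas'' for Lemmas~\ref{Four_Connected_Lemma} and~\ref{Five_Connected_Lemma}. Your fixed-$k$ optimization and the monotonicity of $f(k)=kn-5\binom{k}{2}$ spell out precisely why the displayed inequality $\sigma(S)\leq 5\bigl(1+\cdots+\floor{n/5}\bigr)+(\floor{n/5}+1)(n-5\floor{n/5})$ holds, which the paper leaves implicit.
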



\section[Proof of Theorem 1.3]{Proof of Theorem \ref{Main_Theorem}}

\begin{proof}
We are going to prove Theorem \ref{Main_Theorem} by induction, on the number of vertices. In \cite{29} it is shown that Theorem \ref{Main_Theorem} holds, for $n \leq 18$. 
Therefore, we may assume $n\geq 19$. 
Let $G$ be a maximal planar graph. The proof contains three cases depending on the connectivity of the graph $G$. 
Since $G$ is a maximal planar graph, it is either $3$, $4$ or $5$ connected. 
Thus, we may consider three different cases.

\noindent\begin{bf} Case $1$. Let $G$ be a $5$-connected graph.\end{bf}
For every fixed vertex $v \in V(G)$, consider $\pa{v}$. Since $G$ is $5$-connected, and each of the non-terminal levels  of $\pa{v}$ is a cut set, we have that each non-terminal level has size at least $5$. Therefore from 
Lemma \ref{Five_Connected_Lemma}, we have,  $$W(G)=\frac{1}{2}\sum_{v\in V(G)}\sigma(v)\leq  \frac{n}{2}     \sigma_5(n-1) \leq \frac{n}{20}(n^2+3n+2)< \bigg\lfloor\frac{1}{18}(n^3+3n^2)\bigg\rfloor,$$ for all $n\geq 4$.
Therefore we are done if $G$ is $5$-connected, since $n\geq 19$.

\noindent\begin{bf} Case $2$. Let $G$ be $4$-connected and not $5$-connected. \end{bf} Then $G$ contains a cut set of size $4$, which induces a cycle of length four, by Lemma \ref{S_Connected_Cycle_lemma}.  
Let us denote the vertices of this cut set as $v_1,v_2,v_3$ and $v_4$, forming the cycle in this given order. 
The cut set divides the plane into two regions, we will call them the inner and the outer regions respectively.  
Let us denote the number of vertices in the inner region by $x$. Let us assume, without loss of generality, that $x$ is minimal possible, but greater then one. Obviously $x\leq \frac{n-4}{2}$ or $x=n-5$.
From here on, we deal with several sub-cases depending on the value of $x$.

\begin{bf}Case $2.1$ \end{bf}In this case we assume $x\geq 4$ and $x\neq n-5$. 
Let us consider the sub-graph of $G$, say $G'$, obtained by deleting all vertices from the outer region of the cycle $v_1,v_2,v_3,v_4$ in $G$. 
The graph $G'$ is not maximal, since the outer face is a $4$-cycle. 
The graph $G$ is $4$-connected, therefore it does not contain the edges $\{v_1,v_3\}$ and $\{v_2,v_4\}$, consequently we may add any of them to $G'$, to obtain a maximal planar graph.
Adding an edge decreases the Wiener index of $G'$. 
In the following paragraph, we prove that one of the edges decrease the Wiener index of $G'$ by at most  $\frac{x^2}{16}$.

Let $A_i=\{v\in V(G')|d(v,v_i)<d(v,v_j), \forall j\in\{1,2,3,4\}\setminus\{i\}\}$ for $i\in\{1,2,3,4\}$. 
Let $A$ be the subset of vertices of $G'$ not contained in any of the $A_i$'s. 
So $A,A_1,A_2,A_3,A_4$ is a partition of vertices of $G'$. 
It is simple to observe that, adding the edge $\{v_i,v_{i+2}\}$, for $i\in \{1,2\}$, decreases the distance between a pair of vertices, then these vertices must be from $A_{i}$ and $A_{i+2}$.
If there is a vertex $u$ which has three neighbours from the cut set, without loss of generality say $v_1,v_2,v_3$, then $A_{2}=\emptyset$, since $G$ is $4$-connected. therefore we are done in this situation.    
Otherwise, for each pair  $\{v_1, v_{2}\}$, $\{v_2, v_{3}\}$, $\{v_3, v_{4}\}$, $\{v_4, v_{1}\}$, there is a distinct vertex which is adjacent to both vertices of the pair. 
Therefore the size of $A$ is at least $4$. 
Hence the size of the vertex set $\cup_{i=1}^{4}A_i$, is at most $x$.  
By the AM-GM inequality, we have that one of $\abs{A_1}\cdot\abs{A_3}$ or $\abs{A_2}\cdot\abs{A_4}$ is at most $\frac{x^2}{16}$.
Therefore we can choose one of the edges  $\{v_1,v_3\}$ or $\{v_2,v_4\}$, such that after adding that edge to the graph $G'$, the Wiener index of the graph decreases by at most $\frac{x^2}{16}$. 
Let us denote the maximal planar graph obtained by adding this edge to $G'$ by $G_{x+4}$. 

Similarly, we denote the maximal planar graph obtained from $G$, by deleting all vertices in the inner region and adding the diagonal which decreases the Wiener index by at most  $\frac{(n-x-4)^2}{16}$ by $G_{n-x}$.  

Consider the graph $G_{n-x}$ and a sub-set of it's vertices $S=\{v_{1},v_{2},v_{3},v_{4}\}$. Since the graph $G$ is $4$-connected, each non-terminal level of $\pa{S}^{G_{n-x}}$ has at least $4$ vertices. Therefore we get that $\sigma_{G_{n-x}}(S)\leq\sigma_4(n-x-4)= \frac{(n-x-2)^2}{8}$, from Lemma \ref{Four_Connected_Lemma}.

Recall that $G'$ is the graph  obtained from $G$ by deleting the vertices from the outer region.
For each $i \in \{1,2,3,4\}$, consider the BFS partition $\pa{v_i}^{G'}$.
Note that, $x\geq 4$, $G$ is $4$-connected, and by  minimality of $x$, $x>1$, we have that every non-terminal level of $\pa{v_i}^{G'}$ has at least $5$ vertices, except for the first level which may contain only four vertices and the level before the last, which may also contain four vertices in this case the last level has size exactly one. Status of the $v_i$ is maximised, if number of vertices in the first and before the last level are four, last level contains only one vertex and every other level contains exactly five vertices. 

 To simplify calculations, of the status of the vertex $v_i$,  we may hang a new temporary vertex on the root and we may bring a vertex from the last level to the previous level.  
 This modifications do not change the status of the vertex, but it increases number of vertices. Now we may apply  Lemma \ref{Five_Connected_Lemma} for this BFS partition considering that number of vertices in all levels is exactly 5.
Therefore we have  $\sigma_{G'}(v_i)\leq \frac{(x+4)^2+5(x+4)}{10}$. Observe that this status contains distances, from $v_i$ to other vertices from the cut set, which equals to four. Note that this is an uniform upper bound for the  status of each of the vertices from the cut set.  

Finally we may upper bound the Wiener index of $G$ in the following way, 
\begin{equation*}
\begin{split}
    W(G) &\leq W(G_{n-x})+\frac{(n-x-4)^2}{16}+W(G_{x+4})+\frac{x^2}{16}-8\\
    &+x\cdot\sigma_{G_{n-x}}(\{v_1,v_2,v_3,v_4\})+(n-x-4)\cdot(\sigma_{G'}(v_1)-4).
\end{split}     
\end{equation*}
 In the first line we upper bound all distances  between pairs of vertices on the cut set and outer region, and   between pairs of vertices on the cut set and inner region. We take minus $8$ since distances between the pairs from the cut set was double counted. 
 In the second line we upper bound all distances from the outer region to the inner region. 
 This distances are split in two, distances from the outer region to the cut set and from the fixed vertex, without loss of generality say $v_1$, of the cycle to the inner region. 
 
We are going to prove that $W(G)\leq \frac{1}{18}(n^3+3n^2)-1$, therefore we will be done in this sub-case. We need to prove the following inequality 
\begin{equation*}
\begin{split}
    \frac{1}{18}(n^3+3n^2)-1 &\geq \frac{1}{18}((n-x)^3+3(n-x)^2)+\frac{(n-x-4)^2}{16}\\
    &+\frac{1}{18}((x+4)^3+3(x+4)^2)+\frac{x^2}{16}-8\\
    &+x\cdot\frac{(n-x-2)^2}{8}+(n-x-4)\cdot(\frac{(x+4)^2+5(x+4)}{10}-4).
\end{split}     
\end{equation*}
After we simplify, we get
\begin{equation}
\begin{split}
\frac{82}{45}- \frac{9n}{10}+\frac{n^2}{16}+ \frac{x}{5} + \frac{41 n x}{120}- \frac{n^2 x}{24}- \frac{3 x^2}{40}+ \frac{n x^2}{60} + \frac{x^3}{40}\leq 0.
\end{split}  
\label{eq1}
\end{equation}
We know that $4\leq x\leq \frac{n-4}{2}$ and if we set $x=4$, we get $2176 + 528 n - 75 n^2\leq 0$ which holds for all $n$, $n\geq 10$. 
Therefore, if the derivative of the right hand side of the inequality is negative for all $\{x\mid 4\leq x\leq \frac{n-4}{2}\}$, then the inequality holds for all these values of $x$.
Differentiating the LHS of the Inequality (\ref{eq1}), with respect to $x$, we get
\begin{align}
\label{oscar1}
\begin{split}
&\frac{\delta}{\delta x} \bigg( \frac{82}{45}- \frac{9n}{10}+\frac{n^2}{16}+ \frac{x}{5} + \frac{41 n x}{120}- \frac{n^2 x}{24}- \frac{3 x^2}{40}+ \frac{n x^2}{60} + \frac{x^3}{40}  \bigg)\\
&=\frac{1}{5} + \frac{41 n}{120} - \frac{n^2}{24} - \frac{3 x}{20} + \frac{n x}{30} + \frac{3 x^2}{40}.
\end{split}     
\end{align}

If we set $x=4$ in Equation \ref{oscar1}, we get  $\frac{1}{120} (96 + 57 n - 5 n^2)$, which is negative for all $n$, $n\geq 13$. 
If we set $x=\frac{n-4}{2}$ in Equation \ref{oscar1}, we get  $\frac{1}{160} (-n^2 + 8 n + 128)$, which is negative for all $n$, $n\geq 17$. Therefore Equation \ref{oscar1} is negative in the whole interval. Since $n\geq 19$, we have $W(G)\leq \frac{1}{18}(n^3+3n^2)-1$, and this sub-case is settled.

\begin{bf} Case $2.2$ \end{bf} In this case, we assume $2\leq x\leq 3$.  
From the minimality of $x$, we have $x=2$. 
Let us consider the maximal planar graph, denoted by $G_{n-2}$, obtained from $G$ by deleting these two vertices from the inner region and adding an edge which decreases the Wiener index by at most $\frac{(n-6)^2}{16}$.  

By the choice of $x$, we have that for a vertex inside the cut set  $v$, each  level of $\pa{v}^{G}$  contains at least $5$ vertices, except the first one which  contains only $4$ and  the level before last may contain $4$ vertices too  followed by one vertex in the last level. Therefore the status of the vertex $v$ is maximized, if the last level contains one vertex, the level before the last and the first level contain four vertices and every other level contains five vertices.
Therefore status of the vertices inside can be bounded by $\sigma_5(n)=\frac{1}{10}(n^2+5n)$. This bound comes from Lemma \ref{Five_Connected_Lemma}, after similar modifications of the BFS partition as in previous case.  
Finally we have,

\begin{equation}
\begin{split}
W(G)&\leq W(G_{n-2})+\frac{(n-6)^2}{16}+\frac{2}{10}(n^2+5n)\\
&\leq  \frac{1}{18}((n-2)^3+3(n-2)^2)+\frac{(n-6)^2}{16}+\frac{2}{10}(n^2+5n)\\
&=\frac{1}{18}n^3 + \frac{23}{240}n^2+\frac{1}{4}n - \frac{89}{36}\leq \frac{1}{18}(n^3+3n^2)-1.
\end{split}     
\end{equation}
The last inequality holds for all $n\geq 10$. Therefore we have settled this sub-case too since $n\geq 19$. 

\begin{bf}Case $2.3$ \end{bf}In this case we assume $x=n-5$. Therefore we have a cut set of size one.  With similar reasoning, as in previous case we get

\begin{equation}
\begin{split}
W(G)&\leq W(G_{n-1})+\frac{(n-5)^2}{16}+\frac{1}{10}(n^2+5n)\\
&\leq  \frac{1}{18}((n-1)^3+3(n-1)^2)+\frac{(n-5)^2}{16}+\frac{1}{10}(n^2+5n)\\
&=\frac{1}{18}n^3 + \frac{13}{80}n^2+\frac{7}{24}n - \frac{241}{144}\leq \frac{1}{18}(n^3+3n^2)-1.
\end{split}     
\end{equation}
The last inequality holds for all $n\geq 9$. 
Therefore, we have settled this sub-case too since $n\geq 19$.

We have considered all sub-cases when $G$ is $4$-connected. 
We proved that in this case Wiener index is strictly less than the desired upper bound.

\noindent\begin{bf} Case $3$. 
Let $G$ be $3$-connected and not $4$-connected. \end{bf} 

Since $G$ is not $4$-connected and it is a maximal planar graph, it must have a cut set of size $3$, say  $\{v_1,v_2,v_3\}$. Which induces a triangle from the Lemma \ref{S_Connected_Cycle_lemma}. 
Let us assume, without loss of generality, that number of vertices in the inner region of the cut set is minimal, say $x$.

\begin{bf} Case $3.1.$  \end{bf} Assume $x\leq 2$. From the minimality of $x$, we have $x \not=2$, therefore $x=1$. 
Let us denote this vertex as $v$. Let $G_{n-1}$ be a maximal planar graph obtained from $G$ by deleting the vertex $v$.
From the Lemma \ref{Three_Connected_Lemma}, we have $\sigma_G(v)\leq \frac{1}{6}(n^2+n)-\frac{1}{3}\mathbb{1}_{3|(n-1)}$. Finally we have, 
\begin{equation}
\begin{split}
    W(G) & \leq W(G_{n-1})+\sigma_G(v) \\
    & \leq \frac{1}{18}((n-1)^3+3(n-1)^2)-\frac{1}{9}\mathbb{1}_{3|n}-\frac{2}{9}\mathbb{1}_{3|(n-2)} \\
    &+\frac{1}{6}(n^2+n)-\frac{1}{3}\mathbb{1}_{3|(n-1)}= \frac{n^3}{18}+\frac{n^2}{6}+\frac{1}{9}-\frac{1}{9}\mathbb{1}_{3|(n)}-\frac{2}{9}\mathbb{1}_{3|(n-2)}-\frac{1}{3}\mathbb{1}_{3|(n-1)}\\
    &\leq \bigg\lfloor\frac{1}{18}(n^3+3n^2)\bigg\rfloor.
\end{split}
    \end{equation}
In this case the equality holds if and only if the graph obtained after deleting the vertex $v$ is $T_{n-1}$. 
We can observe that, if we add the vertex $v$ to the graph $T_{n-1}$, the choice that maximize the status of $v$ is only when we get the graph $T_n$.  Hence we have the desired upper bound of the Wiener index and equality holds if and only if $G=T_n$.

\begin{bf} Case $3.2.$  \end{bf} Assume $x=3$.  
Let us denote vertices in the inner region  as $x_1,\ x_2\text{ and }x_3$. From the minimality of $x$ and maximality of $G$, the structure of $G$ in the inner region is well defined, see Figure  \ref{x=3}.
If we remove these three inner vertices,  the graph we get is denoted by $G_{n-3}$ and is still maximal. 
Hence we may use the induction hypothesis for the graph $G_{n-3}$. 
Consider the graph $G_{n-3}$ and a vertex set $S=\{v_1,v_2,v_3\}$. Each level of $\pa{S}^{G_{n-3}}$ has at least three vertices except the terminal one. 
Therefore we may apply Lemma \ref{Three_Connected_Lemma}, then we have $\sigma_{G_{n-3}}(\{v_1,v_2,v_3\})\leq \frac{1}{6}((n-6)^2+3(n-6)+2)$. 
To estimate distances from the vertices in the outer region to the vertices in the inner region we do the following. We first estimate distances from the outer region to the cut set and from the fixed vertex on the cut set to all $x_i$.
The distances from the vertices in the  outer region to the set $\{v_1,v_2,v_3\}$, is  $\sigma_{G_{n-3}}( \{v_1,v_2,v_3\})$. 
The sum of distances from $v_i$ to the vertices $\{x_1,x_2,x_3\}$ is $4$. 
Note that, if we take a vertex in the outer region which has at least two neighbours on the cut set, then for this vertex we need to count $3$ for the distances from the cut set to the  vertices $\{x_1,x_2,x_3\}$.  
Since we have at least two such vertices, all cross distances can be bounded by $3\sigma_{G_{n-3}}( \{v_1,v_2,v_3\})+4(n-5)+6$. 
Then we have, 
\begin{equation}
\begin{split}
    W(G) & \leq W(G_{n-3})+W(K_3)+3\sigma_{G_{n-3}}(\{v_1,v_2,v_3\})+4n-14 \\
    & \leq \frac{1}{18}((n-3)^3+3(n-3)^2)+\frac{1}{2}((n-6)^2+3(n-6)+2)+4n-11\\
    &< \bigg\lfloor\frac{1}{18}(n^3+3n^2)\bigg\rfloor.
\end{split}
    \end{equation}\
Therefore, this case  is also settled.

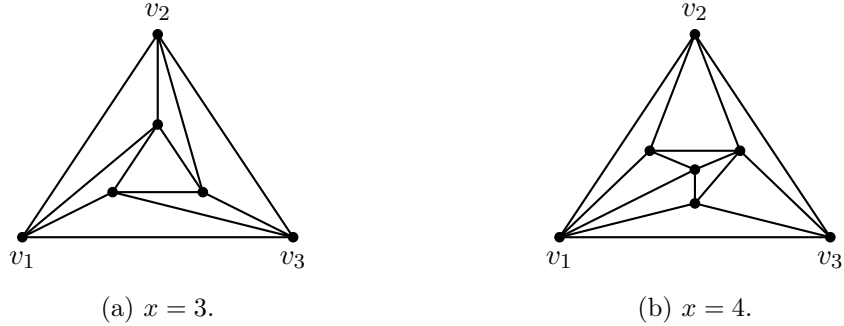
\begin{figure}[h]
\begin{subfigure}{0.55\linewidth}
\centering
\begin{tikzpicture}[scale=0.15]
\draw[fill=black](-12,0)circle(12pt);
\draw[fill=black](12,0)circle(12pt);
\draw[fill=black](0,18)circle(12pt);
\draw[fill=black](-4,4)circle(12pt);
\draw[fill=black](4,4)circle(12pt);
\draw[fill=black](0,10)circle(12pt);
\draw[thick](-12,0)--(0,18)--(12,0)--(-12,0);
\draw[thick](-4,4)--(0,10)--(4,4)--(-4,4);
\draw[thick](-12,0)--(-4,4)(12,0)--(4,4)(0,18)--(0,10)(-12,0)--(0,10)(12,0)--(-4,4)(0,18)--(4,4);
\node at (-12,-2){$v_1$};
\node at (12,-2){$v_3$};
\node at (0,20){$v_2$};
\end{tikzpicture}
\caption{$x=3$.}
\label{x=3}
\end{subfigure}
\begin{subfigure}{0.3\linewidth}
\centering
\begin{tikzpicture}[scale=0.15]
\draw[fill=black](-12,0)circle(12pt);
\draw[fill=black](12,0)circle(12pt);
\draw[fill=black](0,18)circle(12pt);
\draw[fill=black](0,3)circle(12pt);
\draw[fill=black](0,6)circle(12pt);
\draw[fill=black](-4,7.67)circle(12pt);
\draw[fill=black](4,7.67)circle(12pt);
\draw[thick](-12,0)--(12,0)--(0,18)--(-12,0);
\draw[thick](-12,0)--(-4,7.67)--(0,18);
\draw[thick](-12,0)--(0,3)--(12,0);
\draw[thick](12,0)--(4,7.67)--(0,18);
\draw[thick](-12,0)--(0,6)--(4,7.67)--(-4,7.67)--(0,6)--(0,3)--(4,7.67);
\node at (-12,-2) {$v_1$};
\node at (12,-2) {$v_3$};
\node at (0,20) {$v_2$};
\end{tikzpicture}
\caption{$x=4$.}
\label{x=4}
\end{subfigure}
\caption{The unique inner regions for the $3$-connected case when  $x=3$ and $x=4$.}
\label{x=3-4}
\end{figure}

\begin{bf} Case $3.3$  \end{bf} Assume $x=4$. 
From the minimality of $x$ and maximality of the planar graph $G$, the only configuration of the inner region is in Figure \ref{x=4}.
Consider a maximal planar graph on the $n-4$ vertices, say $G_{n-4}$, which is obtained from $G$ by deleting the four inner vertices. 
We will apply the induction hypothesis for this graph, to upper bound the sum of distances between all pairs of vertices from $V(G_{n-4})$ in $G$. 
By applying  Lemma \ref{Three_Connected_Lemma} for $G_{n-4}$ and  $S= \{v_1,v_2,v_3\}$,   we get $\sigma_{G_{n-4}}(\{v_1,v_2,v_3\})\leq\frac{1}{6}((n-4-3)^2+(n-4-3)+2)$. 
The sum of the distances between the four inner vertices is $7$. The sum of the distances from each $v_i$ to all of the vertices inside is at most six.
By following a similar argument as in previous case we have,
\begin{equation}
\begin{split}
    W(G) & \leq \frac{1}{18}((n-4)^3+3(n-4)^2)+7+\frac{4}{6}((n-7)^2+(n-7)+2)+6(n-4)\\
       &< \bigg\lfloor\frac{1}{18}(n^3+3n^2)\bigg\rfloor.
\end{split}
\end{equation}
Therefore, this case is also settled.

\begin{bf} Case $3.4$  \end{bf} Assume $x=5$. 
From the minimality of $x$ and maximality of the planar graph $G$, there are three  configurations of the inner region, see Figure \ref{x=5}. 
Consider a maximal planar graph on the $n-5$ vertices, say $G_{n-5}$, which is  obtained from $G$ by deleting $5$ vertices from the inner region. 
We will  apply the induction hypothesis for this graph $G_{n-5}$, to bound the sum of the distances between the vertices of $V(G_{n-5})$ in the graph $G$. 
 By applying  Lemma \ref{Three_Connected_Lemma} for  $G_{n-5}$ and  $S= \{v_1,v_2,v_3\}$, we get $\sigma_{G_{n-5}}(\{v_1,v_2,v_3\})\leq\frac{1}{6}((n-8)^2+(n-8)+2)$.
 The sum of the distances between five inner vertices is at most $13$. The sum of the distances from $v_i$ to all of the vertices inside is at most $8$.
 Finally  we have,
\begin{equation}
\begin{split}
    W(G) & \leq \frac{1}{18}((n-5)^3+3(n-5)^2)+13+\frac{5}{6}((n-8)^2+(n-8)+2)+8(n-5)\\
         &< \bigg\lfloor\frac{1}{18}(n^3+3n^2)\bigg\rfloor.
\end{split}
\end{equation}
Therefore this case is also settled.

\begin{figure}[h]
\centering
\begin{tikzpicture}[scale=0.15]
\draw[fill=black](-12,0)circle(12pt);
\draw[fill=black](12,0)circle(12pt);
\draw[fill=black](0,18)circle(12pt);
\draw[fill=black](0,4)circle(12pt);
\draw[fill=black](0,2)circle(12pt);
\draw[fill=black](0,6)circle(12pt);
\draw[fill=black](-4,7.67)circle(12pt);
\draw[fill=black](4,7.67)circle(12pt);
\draw[thick](-12,0)--(12,0)--(0,18)--(-12,0);
\draw[thick](-12,0)--(-4,7.67)--(0,18);
\draw[thick](-12,0)--(0,4)--(4,7.67)(0,2)--(4,7.67);
\draw[thick](12,0)--(4,7.67)--(0,18);
\draw[thick](-12,0)--(0,6)--(4,7.67)--(-4,7.67)--(0,6)--(0,4);
\draw[thick](-12,0)--(0,2)--(12,0)(0,2)--(0,4);
\node at (-12,-2){$v_1$};
\node at (12,-2){$v_3$};
\node at (0,20){$v_2$};
\end{tikzpicture}\qquad
\begin{tikzpicture}[scale=0.15]
\draw[fill=black](-12,0)circle(12pt);
\draw[fill=black](12,0)circle(12pt);
\draw[fill=black](0,18)circle(12pt);
\draw[fill=black](-4,4)circle(12pt);
\draw[fill=black](0,3)circle(12pt);
\draw[fill=black](0,9)circle(12pt);
\draw[fill=black](-4,7.67)circle(12pt);
\draw[fill=black](4,7.67)circle(12pt);
\draw[thick](-12,0)--(12,0)--(0,18)--(-12,0);
\draw[thick](-12,0)--(-4,7.67)--(0,18);
\draw[thick](-12,0)--(0,3)--(12,0);
\draw[thick](12,0)--(4,7.67)--(0,18);
\draw[thick](-4,7.67)--(-4,4)--(0,3)--(4,7.67)--(0,9)--(-4,7.67)(0,18)--(0,9);
\draw[thick](-12,0)--(-4,4)--(0,9)--(0,3);
\node at (-12,-2){$v_1$};
\node at (12,-2){$v_3$};
\node at (0,20){$v_2$};
\end{tikzpicture}\qquad
\begin{tikzpicture}[scale=0.15]
\draw[fill=black](-12,0)circle(12pt);
\draw[fill=black](12,0)circle(12pt);
\draw[fill=black](0,18)circle(12pt);
\draw[fill=black](0,3)circle(12pt);
\draw[fill=black](0,6)circle(12pt);
\draw[fill=black](-4,4)circle(12pt);
\draw[fill=black](-4,7.67)circle(12pt);
\draw[fill=black](4,7.67)circle(12pt);
\draw[thick](-12,0)--(12,0)--(0,18)--(-12,0);
\draw[thick](-12,0)--(-4,7.67)--(0,18);
\draw[thick](4,7.67)(0,3)--(4,7.67);
\draw[thick](12,0)--(4,7.67)--(0,18)(-12,0)--(0,3)--(12,0);
\draw[thick](-12,0)--(0,6)--(4,7.67)--(-4,7.67)(0,3)--(0,6)(0,3)--(-4,4);
\draw[thick](-4,4)--(-4,7.67)--(0,6);
\node at (-12,-2){$v_1$};
\node at (12,-2){$v_3$};
\node at (0,20){$v_2$};
\end{tikzpicture}
\caption{3-connected, $x=5$.}
\label{x=5}
\end{figure}
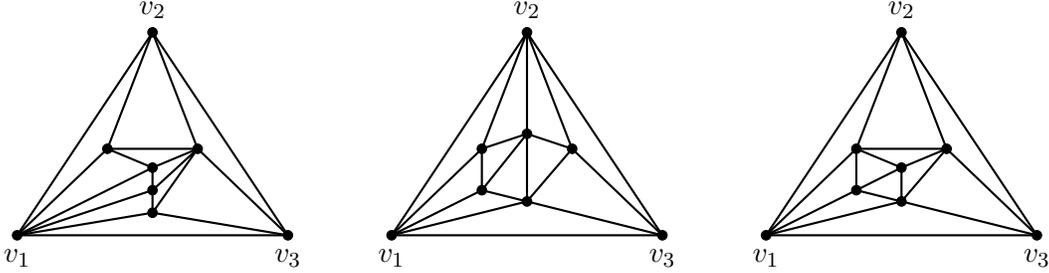

\begin{bf} Case $3.5$  \end{bf} Assume $x\geq 6$. 
First we settle for $x\geq 7$ and then for $x=6$.
Consider the maximal planar graph on $n-x$ vertices, say $G_{n-x}$, which is obtained from $G$ by deleting those $x$ vertices from the inner region of the cut set $\{v_1,v_2,v_3\}$.
Consider the maximal planar graph on $x+3$ vertices, say $G_{x+3}$, which is obtained from  $G$, by deleting all $n-x-3$ vertices from the outer region of the cut set $\{v_1,v_2,v_3\}$.
We know by induction that $W(G_{x+3})\leq \frac{1}{18}((x+3)^3+3(x+3)^2)$. 
There are at least two vertices from the cut set $\{v_1,v_2,v_3\}$, such that each of them has at least two neighbours in the outer region of the cut set. 
Without loss of generality, we may assume they are $v_1$ and $v_2$. 
 Hence if we consider $\pa{v_1}^{G_{n-x}}$ and $\pa{v_2}^{G_{n-x}}$, we will have $4$ vertices in the first level of  and at least three in the following levels until the last one. 
 Therefore  we have $\sigma_{G_{n-x}}(v_1)\leq \sigma_3(n-x-2)+1\leq\frac{1}{6}((n-x-2)^2+3(n-x-2)+8)$  from Lemma \ref{Three_Connected_Lemma} and same for $v_2$.
Now let us consider  $\pa{\{v_1,v_2\}}^{G_{x+3}}$, from minimality of $x$, each non-terminal level of the $\pa{\{v_1,v_2\}}^{G_{x+3}}$ contains at least 4 vertices. 
Therefore by applying Lemma \ref{Four_Connected_Lemma}, we get $\sigma_{G_{x+3}}(\{v_1,v_2\})\leq \frac{1}{8}(x^2+6x+9)$. 
We have,
\begin{equation}
\begin{split}
    W(G) \leq & (W(G_{x+3})+W(G_{n-x})-3)+(n-x-3)(\sigma_{G_{x+3}}(\{v_1,v_2\})-1)\\&
    +x \bigg(\text{max} \bigg\{ \sigma_{G_{n-x}}(v_{1}),\sigma_{G_{n-x}}(v_{2}) \bigg\}-2\bigg).
\end{split}
\end{equation}
The first term of the sum is an upper bound for the sum of all distances which does not cross the cut set.
The second and the third terms upper-bounds all cross distances in the following way- we may split this sum into two parts for each crossing pair sum from inside to $\{v_1, v_2\}$ set and from $v_i$, $i \in \{1,2\}$ to the vertex outside, those are the second and the third terms of the sum accordingly. 
Therefore applying estimates, we get
\begin{equation}\label{nn}
    \begin{split}
    \frac{1}{18}(n^3+3n^2)-1 &\geq  \frac{1}{18}((x+3)^3+3(x+3)^2)+\frac{1}{18}((n-x)^3+3(n-x)^2)-3\\
    & +\frac{(n-x-3)(x^2+6x+1)}{8}+\frac{x((n-x-2)^2+3(n-x-2)-4)}{6}.
    \end{split}
\end{equation}
After simplification we  have
\begin{equation}\label{Tempo1}
-x^3+x^2(n+3)+x(21-6n)-(15+3n)\geq 0.
\end{equation}
where 
\[
\frac{\delta}{\delta x} \bigg(-x^3+x^2(n+3)+x(21-6n)-(15+3n)\bigg)=-3x^2+(2n+6)x+21-6n.
\]
The derivative is positive when  $x\in [7,\frac{n}{2}]$. 
Hence since the inequality (\ref{Tempo1}) holds for  $x=7$, it also holds for all $x$, $x \in [7,\frac{n}{2}]$. 
Therefore, if $x\geq 7$ we are done.


Finally if $x=6$, then distances from $v_1$ and $v_2$ to all vertices inside is $9$ instead of $\frac{73}{8}$ as in \ref{nn}. 
Thus we get an improvement of Inequality (\ref{nn}), which shows that $W(G) < \floor{\frac{1}{18}(n^3+3n^2)}$ even for $x=6$. Therefore we have settled $3$-connected case too. 

\end{proof}
\section{Concluding Remarks}
There is the unique maximal planar graph $T_n$, maximizing the Wiener index, Theorem \ref{Main_Theorem}.
Clearly $T_n$  is not $4$-connected. 
One may ask for the maximum Wiener index for the family of $4$-connected and $5$-connected maximal planar graphs. 
In \cite{29}, 
asymptotic results were proved for both cases. 
Moreover, based on their constructions, they conjecture  sharp bounds for both $4$-connected and $5$-connected maximal planar graphs. 
Their conjectures are the following.
\begin{conjecture}
Let $G$ be an $n$, $n\geq 6$, vertex, maximal, $4$-connected, planar graph. Then we have
\begin{equation*}W(G)\leq 
\begin{cases}
\frac{1}{24}n^3+\frac{1}{4}n^2+\frac{1}{3}n-2, &\text{if $n\equiv 0,2 \; (mod \ 4)$;}\\
\frac{1}{24}n^3+\frac{1}{4}n^2+\frac{5}{24}n-\frac{3}{2}, &\text{if $n\equiv 1 \;(mod \ 4)$.}\\
\frac{1}{24}n^3+\frac{1}{4}n^2+\frac{5}{24}n-1, &\text{if $n\equiv 3 \;(mod \ 4)$;}
\end{cases}
\end{equation*}
\end{conjecture}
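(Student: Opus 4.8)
The plan is to follow the architecture of the proof of Theorem~\ref{Main_Theorem}: induct on $n$ and split according to whether the $4$-connected maximal planar graph $G$ is $5$-connected or only $4$-connected, the latter case producing an induced $4$-cycle cut by Lemma~\ref{S_Connected_Cycle_lemma}. The first task is to identify the conjectured extremal graph, since the residue classes modulo $4$ in the statement signal that a single family $Q_n$ attains the bound, just as $T_n$ does in Theorem~\ref{Main_Theorem}. The natural candidate, and presumably the construction of \cite{29}, is the \emph{square tube}: take $k$ concentric $4$-cycles $S_1,\dots,S_k$ and triangulate each annulus between $S_i$ and $S_{i+1}$ in antiprism fashion (adding eight edges, so every internal vertex has degree six), then cap the two terminal squares so that the result is a $4$-connected triangulation; the three listed cases $n\equiv 0,2$, $n\equiv 1$, $n\equiv 3 \pmod 4$ correspond to the independent choices for the two caps. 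A direct computation of $W(Q_n)$, in which the sixteen pairs between cross-sections $i$ and $j$ contribute $16\,|i-j|$ and sum to $\tfrac1{24}n^3+O(n^2)$, should reproduce the stated right-hand side exactly and establish sharpness.

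I would dispose of the $5$-connected case first, and here lies a genuine difficulty absent from Theorem~\ref{Main_Theorem}. There the $5$-connected case was handled by the crude per-vertex estimate $W(G)\le\tfrac n2\,\sigma_5(n-1)\sim\tfrac1{20}n^3$, which sufficed because $\tfrac1{20}<\tfrac1{18}$; but now $\tfrac1{20}>\tfrac1{24}$, so this estimate is useless. Instead one must invoke the sharper asymptotics of Theorem~\ref{az} with $k=5$, namely $W(G)\le\tfrac1{30}n^3+Cn^{5/2}$. Since $\tfrac1{30}<\tfrac1{24}$, this yields $W(G)<\tfrac1{24}n^3+\tfrac14n^2+\cdots$ for every $n$ beyond an explicit threshold of order $C^2$, so $5$-connected graphs are never extremal in that range; the finitely many smaller orders would be cleared by direct verification. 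Making this rigorous requires extracting an explicit value of the constant $C$ from the proof of Theorem~\ref{az}.

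The core is the $4$-connected but not $5$-connected case. By Lemma~\ref{S_Connected_Cycle_lemma}, $G$ has a $4$-cut inducing a $4$-cycle $v_1v_2v_3v_4$; choose it with inner region of minimum size $x$. Mirroring Case~$2$ of Theorem~\ref{Main_Theorem}, I would write $W(G)$ as the Wiener indices of the inner and outer pieces plus the cross-distances through the cut, bounding the cross-terms by $x\cdot\sigma(\{v_1,v_2,v_3,v_4\})$ measured in the outer piece and $(n-x-4)\cdot\sigma(v_i)$ measured in the inner piece. Because every non-terminal BFS level of a $4$-connected graph has at least four vertices, and minimality of $x$ forces at least five in the interior levels of each piece, these statuses are controlled by $\sigma_4$ and $\sigma_5$ through Lemmas~\ref{Four_Connected_Lemma} and~\ref{Five_Connected_Lemma}. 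The resulting polynomial inequality in $x$ on $4\le x\le\frac{n-4}2$ would then be settled by the derivative analysis used for \eqref{eq1}--\eqref{oscar1}, with the small-$x$ regimes treated separately as in Cases~$2.2$--$2.3$.

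The main obstacle is the induction itself rather than the arithmetic. The reduced pieces arise by cutting along the boundary $4$-cycle, but re-triangulating that quadrilateral face with a diagonal destroys $4$-connectivity, so the $4$-connected induction hypothesis does not apply to them --- unlike Theorem~\ref{Main_Theorem}, whose hypothesis covered all maximal planar graphs. I would repair this by strengthening the inductive statement to a class stable under the reduction, namely $4$-connected triangulations of a disk with a quadrilateral boundary (maximal planar graphs carrying one distinguished $4$-face and no separating triangle), and prove the bound for this class while tracking $\sigma$ of the boundary cycle rather than collapsing it. Controlling how the extremal value partitions between the two quadrilateral-bounded halves, and verifying that equality forces the square-tube structure at every level of the recursion, is where I expect the real work to lie; the sharp lower-order terms $\tfrac13n-2$, $\tfrac5{24}n-\tfrac32$, $\tfrac5{24}n-1$ will emerge only if the capping of $Q_n$ and the boundary bookkeeping are tracked with the same care as the indicators $\mathbb{1}_{3\mid\cdot}$ in Case~$3.1$.
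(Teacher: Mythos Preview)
The statement you are attempting to prove is not a theorem in the paper but a \emph{conjecture}, listed in the Concluding Remarks as an open problem attributed to~\cite{29}. The paper contains no proof of it, so there is nothing to compare your proposal against.

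That said, your plan is a reasonable outline, and you have already put your finger on the two places where it genuinely breaks. First, the $5$-connected case: you correctly note that the crude bound $\tfrac{n}{2}\sigma_5(n-1)\sim\tfrac{1}{20}n^3$ is on the wrong side of $\tfrac{1}{24}n^3$, and that one must appeal to Theorem~\ref{az} with $k=5$; but that result as stated gives only an asymptotic with an unspecified constant $C$, so ``extracting an explicit value of $C$'' is not a routine step --- it would require reworking the proof in~\cite{29}, and until that is done your base for the induction is not finite. Second, and more seriously, the inductive structure collapses exactly where you say it does: cutting along the $4$-cycle and adding a diagonal produces a graph with a separating triangle, so the pieces are no longer $4$-connected and the induction hypothesis does not apply to them. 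Your proposed fix --- enlarging the class to $4$-connected disk triangulations with a quadrilateral boundary and tracking boundary status --- is plausible in spirit but is a substantial new argument, not a minor patch; in particular the extremal objects in that enlarged class are not obviously the sliced square tubes, and the lower-order terms you want to recover will be sensitive to exactly how the two caps interact with the boundary bookkeeping. As written, the proposal is a sketch of where the difficulties lie rather than a proof.
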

\begin{conjecture}
Let $G$ be an $n$, $n\geq 12$, vertex, maximal, $4$-connected, planar graph. Then we have
\begin{equation*}W(G)\leq 
\begin{cases}
\frac{1}{30}n^3+\frac{3}{10}n^2-\frac{23}{15}n+32, &\text{if $n\equiv 0 \;(mod \ 5)$;}\\
\frac{1}{30}n^3+\frac{3}{10}n^2-\frac{23}{15}n+\frac{156}{5}, &\text{if $n\equiv 1 \;(mod \ 5)$.}\\
\frac{1}{30}n^3+\frac{3}{10}n^2-\frac{23}{15}n+\frac{168}{5}, &\text{if $n\equiv 2 \;(mod \ 5)$;}\\
\frac{1}{30}n^3+\frac{3}{10}n^2-\frac{23}{15}n+31, &\text{if $n\equiv 3 \;(mod \ 5)$;}\\
\frac{1}{30}n^3+\frac{3}{10}n^2-\frac{23}{15}n+\frac{161}{5}, &\text{if $n\equiv 4 \;(mod \ 5)$;}\\
\end{cases}
\end{equation*}
\end{conjecture}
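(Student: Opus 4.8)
The plan is to mirror the inductive architecture of the proof of Theorem \ref{Main_Theorem}, but to run the entire argument inside the class of $5$-connected maximal planar graphs, since the leading coefficient $\frac{1}{30}=\frac{1}{6\cdot 5}$ places the statement squarely in the $k=5$ regime of Theorem \ref{az} (so the stated connectivity is to be read as $5$-connected). The first task is to pin down the extremal family. The natural candidate generalizes the icosahedron: take a top apex $t$, a bottom apex $b$, and $k$ concentric pentagons $P_1,\dots,P_k$, joining $t$ to $P_1$, $b$ to $P_k$, and each consecutive pair $P_j,P_{j+1}$ by a triangulated antiprism band. This is a $5$-connected triangulation on $5k+2$ vertices whose BFS partition $\pa{t}$ has every non-terminal level of size exactly $5$, which is precisely the equality case of Lemma \ref{Five_Connected_Lemma}; the remaining residues modulo $5$ are obtained by inserting degree-$5$ vertices into apex-incident faces, exactly as $T_n$ is built up from the $3\mid n$ case in Definition \ref{xx}. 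I would first compute $W$ of this family in closed form and verify it equals the right-hand side in all five residue classes, thereby fixing the additive constants.

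For the upper bound I would induct on $n$, assuming the bound for all smaller $5$-connected maximal planar graphs. A $5$-connected maximal planar graph has minimum degree $5$, and the link of any degree-$5$ vertex forms a separating $5$-cycle (Lemma \ref{S_Connected_Cycle_lemma}); choose such a cut $C=\{v_1,\dots,v_5\}$ with the inner region minimal of size $x$. As in Cases $2$--$3$ of Theorem \ref{Main_Theorem} I would stratify on $x$. For small $x$ the triangulated interior of a pentagon is rigid, so there are only finitely many configurations, each reducing to a one-vertex (or few-vertex) deletion whose contribution is bounded by $\sigma_5$ via Lemma \ref{Five_Connected_Lemma}. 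For intermediate and large $x$ I would delete the lighter side and split the cross-distances into a piece $x\cdot\sigma(C)$, bounded against the surviving side by $\sigma_5$ (or by $\sigma_4$ when rooting at a proper subset of $C$), and a piece $(n-x-5)\cdot(\sigma(v_i)-\text{const})$ bounded by $\sigma_5$ from a single vertex; the whole inequality then reduces to a cubic in $x$ whose derivative is monotone on $[x_0,\tfrac{n}{2}]$, so checking the endpoint suffices. This is the routine, if lengthy, part.

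The main obstacle is preserving $5$-connectivity under the surgery. In the $3$-connected proof, deleting a side of a triangular cut leaves a triangular face and needs no new edges; in the $4$-connected proof a single chord retriangulates the quadrilateral face; but here retriangulating the pentagonal face left behind requires \emph{two} chords, and there is no reason the resulting triangulation is again $5$-connected, since the chord endpoints typically drop below connectivity $5$. Hence the induction hypothesis, which lives only on $5$-connected graphs, cannot be applied to the surgered graph. I expect this to be the crux. The fix I would pursue is to strengthen the induction to a \emph{rooted} statement about triangulations of a disc with a distinguished pentagonal boundary face in which every non-terminal BFS level from the boundary has at least $5$ vertices. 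Such pentagonal near-triangulations are closed under the deletion operation and force no chord insertion, so the status estimates of Lemma \ref{Five_Connected_Lemma} and Lemma \ref{Four_Connected_Lemma} apply directly to the two sides without ever requiring the surgered graph to be globally $5$-connected; the genuine $5$-connectivity is then invoked only to guarantee that non-terminal levels are large and that minimal cuts are $5$-cycles.

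Finally I would treat the small-$x$ base cases by hand to extract the exact additive constants in each residue class, and characterize equality by following the template of Case $3.1$ of Theorem \ref{Main_Theorem}: one checks that any status-maximizing way of re-attaching the deleted vertices reproduces the icosahedral stack. The delicate quantitative point throughout is that the $\frac{x^2}{16}$-type chord-correction term of the $4$-connected argument must be replaced by a two-chord analysis, and ensuring these corrections stay small enough to leave the target inequality with room to spare is where the computation will be tightest.
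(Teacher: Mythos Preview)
The statement you are addressing is not a theorem of the paper but an open \emph{conjecture}, stated in the Concluding Remarks as Conjecture~4.2 and attributed to \cite{29}. The paper offers no proof of it whatsoever, so there is no argument to compare your proposal against. (You correctly diagnose that the phrase ``$4$-connected'' in the statement is a typographical slip; the coefficient $\tfrac{1}{30}=\tfrac{1}{6\cdot5}$ and the mod $5$ case split match the $k=5$ regime of Theorem~\ref{az}, so the intended hypothesis is $5$-connected.)

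As a research plan your outline is reasonable and self-aware: you have identified the right extremal family, the right status lemma (Lemma~\ref{Five_Connected_Lemma}), and the genuine obstruction, namely that deleting one side of a separating $5$-cycle leaves a pentagonal face whose retriangulation by two chords need not preserve $5$-connectivity, so the inductive hypothesis cannot be invoked on the surgered graph in the naive way. Your proposed fix---strengthening the induction to pentagonally-rooted near-triangulations with a level-size condition---is plausible but is itself the heart of the problem, not a routine step: one must formulate the correct rooted statement, prove that it is closed under the surgery, and control the two-chord correction term quantitatively. None of this is carried out in your proposal, and it is precisely why the conjecture is still open. What you have written is a credible attack plan, not a proof.
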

\section*{Acknowledgements}
The research of the second and the fourth authors is partially supported by the National Research, Development and Innovation Office -- NKFIH, grant K 116769 and SNN 117879. 
The research of the fourth author is partially supported by  Shota Rustaveli National Science Foundation of Georgia SRNSFG, grant number FR-18-2499.


\begin{thebibliography}{1}

\bibitem{9}  R.A. Beezer, J.E. Riegsecker, B.A. Smith. \newblock Using minimum degree to bound average distance. \newblock \emph{Discrete Math,} 226, (1-3), (2001): 365--371.

\bibitem{28}  Z. Che, K.L. Collins. \newblock An upper bound on Wiener indices of maximal planar graphs. \newblock \emph{Discrete Applied Mathematics}, 258, (2018):76-86. 

\bibitem{29} \'E. Czabarka, P. Dankelmann, T. Olsen, L.A. Sz\'ekely, \newblock Wiener Index and Remoteness of Planar Triangulations and Quadrangulations. \newblock arXiv:1905.06753v1 (2019).

\bibitem{10}  P. Dankelmann, R.C. Entringer. \newblock Average distance, minimum degree and spanning trees. \newblock \emph{Journal of Graph
Theory,} 33(1), (2000): 1–13.

\bibitem{15}  P. Dankelmann, S. Mukwembi, H.C. Swart. \newblock Average distance and edge-connectivity I. \newblock \emph{SIAM Journal of Discrete Mathematics,} 22, (2008): 92–101.

\bibitem{14} P. Dankelmann, S. Mukwembi, H.C. Swart. \newblock Average distance and edge-connectivity II. \newblock \emph{SIAM Journal of Discrete Mathematics}, 21, (2008): 1035–1052.



\bibitem{12} P. Dankelmann, S. Mukwembi, H.C. Swart. \newblock Average distance and vertex connectivity. \newblock \emph{Journal of Graph Theory 62 (2009),} 157–177.


\bibitem{17} K.Ch. Das, M.J. Nadjafi-Arani. \newblock On maximum Wiener index of trees and graphs with given radius. \newblock \emph{Journal of Combinatorial Optimization,} 34.2, (2017): 574–587.

\bibitem{1}A. A. Dobrynin, R. Entringer and I. Gutman. \newblock Wiener index of trees: Theory and applications. \newblock \emph{Acta Applicandae Mathematica,} 66(3), (2001): 211–249.

\bibitem{2} A. A. Dobrynin, I. Gutman, S. Klav\v zar and P. \v Zigert. \newblock Wiener index of hexagonal systems.   \newblock \emph{Acta Applicandae Mathematica,} 72(3), (2002): 247–294

\bibitem{3}A. A. Dobrynin and L. S. Mel’nikov. \newblock Wiener index of line graphs. \newblock\emph{Distance in Molecular Graphs-Theory, Univ. Kragujevac, Kragujevac}, (2012): 85–121.

\bibitem{27} M.El Marraki, A. Modabish. \newblock Wiener index of planar maps. \newblock \emph{Journal of Theoretical and Applied Information Technology (JATIT),} 18.1: (2010)7–10.

\bibitem{fourth}R. C. Entringer, D. E. Jackson and D. A. Snyder. \newblock Distance in graphs. \newblock \emph{Czechoslovak Mathematical  Journal,} 26(2), (1976): 283–296.

\bibitem{13} O. Favaron, M. Kouider, M. Mah\'eo. \newblock Edge-vulnerability and mean distance.  \newblock \emph{Networks,} 19(2), (1989): 493–504.

\bibitem{16} M. Fischermann, A. Hoffmann, D. Rautenbach, L. Sz\'ekely and L. Volkmann. \newblock Wiener index versus maximum degree in trees. \newblock \emph{Discrete Applied Mathematics,} 122 (1-3), (2002): 127–137.

\bibitem{18}  I. Gutman, R. Cruz, J. Rada. \newblock Wiener index of Eulerian graphs. \newblock \emph{Discrete Applied Mathematics} 162, (2014): 247–250.

\bibitem{third} F. Harary. \newblock Status and contrastatus. \newblock \emph{Sociometry,} 22(1), (1959): 23–43.

\bibitem{19}  S. Klav\v zar, M.J. Nadjafi-Arani. \newblock Wiener index in weighted graphs via unification of $\Theta^{\text{*}}$-classes. \newblock \emph{European Journal of Combinatorics,} 36, (2014): 71–76.

\bibitem{20}  M. Knor, B. Lu\v zar, R. \v Skrekovski, I. Gutman. \newblock On Wiener index of common neighborhood graphs. \newblock \emph{MATCH Commun. Math. Comput. Chem,} 72(1), (2014): 321–332.

\bibitem{21}  M. Knor, R. \v Skrekovski, A. Tepeh. \newblock Mathematical aspects of Wiener index. \newblock \emph{Ars Mathematica Contemporanea,} 11(2), (2016): 327–352.

\bibitem{4}M. Knor and R. \v Skrekovski. \newblock Wiener index of line graphs. \newblock \emph{Quantitative Graph Theory: Mathematical Foundations and Applications,} (2014): 279–301.

\bibitem{11}  M. Kouider, P. Winkler. \newblock Mean distance and minimum degree. \newblock \emph{Journal of  Graph Theory} 25(1) (1997): 95–99.

\bibitem{22}  X. Li, Y. Mao, I. Gutman. \newblock Inverse problem on the Steiner Wiener index. \newblock \emph{Discussiones Mathematicae Graph Theory}, 38(1), (2018): 83–95.

\bibitem{8} L. Lov\'asz. \newblock Combinatorial Problems and Exercises. \newblock \emph{North Holland} (1979).

\bibitem{23}  S. Mukwembi, T. Vetr\'ik. \newblock Wiener index of trees of given order and diameter at most 6. \newblock \emph{Bulletin of the Australian Mathematical Society}, 89(3) (2014): 379–396.

\bibitem{7} J. Plesn\'ik. \newblock On the sum of all distances in a graph or digraph. \newblock \emph{Journal of Graph Theory,} 8(1) (1984): 1–24.

\bibitem{fifth}L. \v Solt\'es. \newblock Transmission in graphs: a bound and vertex removing. \newblock \emph{Mathematica Slovaca 41(1),}  (1991):11–16.

\bibitem{24}  S.G. Wagner, H. Wang, G. Yu. \newblock Molecular graphs and the inverse Wiener index problem. \newblock \emph{Discrete Applied Mathematics,} 157(7), (2009): 1544–1554.

\bibitem{25} S.G. Wagner. \newblock A class of trees and its Wiener index. \newblock  \emph{Acta Applicandae Mathematica,} 91 (2) (2006): 119–132.

\bibitem{26} H. Wang, G. Yu. \newblock All but 49 numbers are Wiener indices of trees. \newblock \emph{Acta Applicandae Mathematica,} 91 (1) (2006):15–20.

\bibitem{first}  H. Wiener. \newblock Structural determination of paraffin boiling points. \newblock \emph{Journal of the American Chemical Society,} 69(1), (1947):17–20.

\bibitem{5} K. Xu, M. Liu, K.C. Das, I. Gutman and B. Furtula. \newblock A survey on graphs extremal with respect to distance-based topological indices. \newblock \emph{MATCH Commun. Math. Comput. Chem,} 71(3) (2014):
461–508.
\end{thebibliography}
\end{document}